\title{Regular Functors and Relative Realizability Categories}
\author{Wouter Pieter Stekelenburg\\ Department of Mathematics\\ Utrecht University}
\theoremstyle{plain}
\newtheorem{theorem}{Theorem}
\newtheorem*{theorem*}{Theorem}
\newtheorem{lemma}[theorem]{Lemma}
\newtheorem*{lemma*}{Lemma}
\newtheorem{corollary}[theorem]{Corollary}
\theoremstyle{definition}
\newtheorem{definition}[theorem]{Definition}
\newtheorem{example}[theorem]{Example}
\newtheorem{remark}[theorem]{Remark}
\newcommand{\cat}{\mathcal}
\newcommand{\Cat}{\mathsf}
\newcommand{\Asm}{\Cat{Asm}}
\newcommand{\RT}{\Cat{RT}}
\newcommand{\Sub}{\Cat{Sub}}
\newcommand{\Sh}{\Cat{Sh}}
\newcommand{\pow}{\mathbf P}
\newcommand{\true}{\mathsf T}
\newcommand{\false}{\mathsf F}
\newcommand{\pair}{\mathsf P}
\newcommand{\dom}{\mathrm{dom}}
\newcommand{\cod}{\mathrm{cod}}
\newcommand\id{\mathrm{id}}
\newcommand{\N}{\mathbb N}
\newcommand{\converges}{\mathord{\downarrow}}
\newcommand{\convergesto}{\downarrow}
\newcommand{\A}{\mathring A}
\newcommand{\db}[1]{\llbracket #1 \rrbracket}
\newcommand{\partar}{\rightharpoonup}
\newcommand{\im}[1]{\exists_{#1}}\newcommand{\pre}[1]{#1^{-1}}
\newcommand{\Dom}{\mathrm D}
\newcommand\hide[1]{}
\begin{document}
\maketitle
\begin{abstract} The \emph{relative realizability toposes} that Awodey, Birkedal and Scott introduced in \cite{MR1909030} satisfy a universal property that involves regular functors to other categories. We use this universal property to define what relative realizability categories are, when based on other categories than of the topos of sets. This paper explains the property and gives a construction for relative realizability categories that works for arbitrary base Heyting categories. The universal property shows us some new geometric morphisms to relative realizability toposes too.
\end{abstract}

\section{Introduction} 
This paper concerns the \emph{relative realizability toposes} that Awodey, Birkedal and Scott introduced in \cite{MR1909030}. Just like realizability toposes, relative realizability toposes implicitly assign subsets of a partial combinatory algebra to the propositions of their internal languages. The members of these subsets are said to \emph{realize} the propositions they are assigned to. While realizability toposes satisfy every proposition that has an inhabited set of realizers, relative realizability toposes only satisfy propositions whose set of realizers intersect a suitable subset of the partial combinatory algebra.

Relative realizability toposes have a universal property that dictates the behavior of regular functors into other categories. Using this universal property we develop relative realizability categories for \emph{order partial combinatory algebras} (see subsection \ref{OPCA pairs}) that live in arbitrary Heyting categories. We also consider new relative realizability toposes and geometric morphisms that result from the generalized construction.

\subsection{Realizability toposes}\hide{Iets met andermans bijdragen dus}
This paper builds on the following research in realizability and category theory.

The use of toposes to study realizability started with Hylands \emph{effective topos} \cite{MR717245}. The construction of this topos is easily generalizes to other partial combinatory algebras using \emph{tripos theory} (see \cite{Pittsthesis}, \cite{MR578267}), and this is where realizability toposes come from.

In his thesis \cite{RTnLS} Longley  defined \emph{applicative morphisms} between partial combinatory algebras and proved an equivalence between these morphisms and certain regular functors between realizability toposes. He also showed that realizability toposes satisfy a universal property relative to one of their small subcategories (the category of \emph{modest sets}).

Carboni, Freyd and Scedrov \cite{MR948482} showed that every realizability topos is an \emph{exact completion} (definition in subsection \ref{contop}) of its regular subcategory of $\neg\neg$-separated objects, also called \emph{assemblies}. Menni derived conditions that make an exact completion a topos \cite{Menni00exactcompletions}, \cite{MR1900904}. 

In \cite{MR1981211} van Oosten and Hofstra introduce order partial combinatory algebras, and the realizability toposes related to them. They generalize Longley's applicative morphisms and characterize the applicative morphisms that correspond to geometric morphisms between toposes. 

We see two generalizations of realizability coming together in relative realizability, namely, a more flexible definition of validity, and the idea of developing realizability in a non classical context. 

Kleene and Vesley proposed an early example of relative realizability in \cite{MR0176922}. The partial combinatory algebra is \emph{Kleene's second model} (example \ref{Ktwo} below), whose members are functions $\N\to\N$; however, only propositions realized by total recursive function are valid. The relative realizability of Kleene and Vesley already had an intuitionist context, i.e., it considered how to prove the relative realizability of propositions constructively.

Awodey, Birkedal and Scott introduced toposes for this variation of realizability in \cite{MR1909030} and Bauer and Birkedal studied the abstract properties of relative realizability toposes in their Ph. D. theses \cite{MR1769604}, \cite{MR2701797}. In \cite{MR1933398} van Oosten and Birkedal described relative realizability as realizability over a internal partial combinatory algebra in another topos.

\subsection{In this paper}
Next section generalizes the construction of the category of assemblies so that it works for order partial combinatory algebras that live in arbitrary Heyting categories. This is a categorical way to develop relative realizability in non predicative constructive contexts. However, we use a universal property to define these categories, and introduce the construction as a constructive existence proof.

Section \ref{RRT} shows that the exact completion of a category of assemblies constructed for an order partial combinatory algebra in a topos, is a topos. It shows, in other words, that relative realizability toposes are toposes, even if we work with another base topos than the topos of sets.

We use the universal property to find regular functors form relative realizability toposes into other categories in section \ref{F}. In particular, we look at geometric morphisms from localic toposes and from other realizability toposes into realizability toposes.

\section{Relative Realizability Categories}
This section defines relative realizability categories by a universal property, and then proves the existence of categories that satisfy this property. It starts by introducing the structure of the object of realizers, and ends by considering the advantages of projective terminal objects.

Though we are mainly interested in the development of realizability in toposes, the fact that toposes have power objects does not play an essential role in realizability. We therefore develop relative realizability in the larger class of \emph{Heyting categories}. A Heyting category is a category that has first order intuitionistic logic as its internal language. Specifically, $\cat E$ is a Heyting category if for every object $X$ the class $\Sub(X)$ of subobjects of $X$ is a Heyting algebra and for every arrow $f:X\to Y$ the inverse image map $\pre f:\Sub(Y)\to\Sub(X)$ has both adjoints $\im f \dashv \pre f \dashv \forall_f$. We will often use the internal language to define objects of Heyting categories.

\subsection{OPCA pairs}\label{OPCA pairs}
In this subsection we will define \emph{ordered partial combinatory algebras} as combinatory complete ordered partial applicative structures.

\begin{definition} An \emph{ordered partial applicative structure} or \emph{OPAS} is an object with an ordering $\leq$ and a monotone partial binary operator $(x,y)\mapsto xy$ called \emph{application}, whose domain is downward closed. If $x$, $y$ and $z$ are elements of an OPAS, we write $xy\convergesto z$ for: `the application of $x$ to $y$ is defined and is equal to $z$'. 
The formula $xy\converges$ means that there is a $z$ such that $xy\convergesto z$, i.e, that $(x,y)$ is in the domain of the application operator.
\end{definition}

We single out certain partial monotone arrows of OPASes.

\begin{definition} For each OPAS $A$, $a\in A$, $n\in N$, $U\subseteq A^n$ and $f: U\to A$, we say that $a$ \emph{represents} or \emph{realizes} $f$ or that $f$ is \emph{representable}, if for all $\vec x\in \dom f$, there is a $y\leq f(\vec x)$ such that $((ax_1)\dots)x_n\convergesto y$. We call such arrows \emph{partial representable arrows}. \end{definition}

\begin{remark}\label{internal}
We interpret this definition in the internal language of the Heyting category. So relative to an OPAS $A$ a partial morphism $f: U\subseteq A^n \partar A$ is representable if and only if the following subobject of $A$ is inhabited.
\[ \db f = \Set{a\in A| \forall \vec x\in U.\exists y\in A. y\leq f(\vec x)\land ((ax_1)\dots)x_n \convergesto y } \]
This \emph{object of realizers} of $f$ may not have any global section. \end{remark}

We are interested in OPASes that represent all partial arrows that are constructed by repeated use of application.

\begin{definition} The set of \emph{partial combinatory arrows} is the least set of partial arbitrary-ary arrows $A^n\partar A$ that contains projections $\vec x\mapsto x_i$ and is closed under pointwise application. So $(x,y) \mapsto x$ and $(x,y,z) \mapsto xz(yz)$ are both examples of partial combinatory arrows. An OPAS is \emph{combinatory complete}, if every partial combinatory arrow is representable. Combinatory complete OPASes are called \emph{ordered partial combinatory algebras} or \emph{OPCAs} \cite{MR1981211}.
\emph{Partial combinatory algebras} or \emph{PCAs} are OPCAs that have the discrete ordering.
\end{definition}

Although combinatory completeness uses universal quantification in its definition, there is a way to formalize this property using only regular logic.

\begin{lemma} There is a regular theory whose models are OPCAs. \end{lemma}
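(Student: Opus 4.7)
The plan is to axiomatise OPCAs in the regular fragment of first-order logic (whose connectives are $\top$, $\land$, $=$, $\exists$) using a binary relation $\leq$ for the order and a ternary relation $\mathrm{App}(x,y,z)$ for $xy\convergesto z$, together with two constants $k$ and $s$ for the basic combinators. The universal quantification hidden in combinatory completeness will then be replaced by finitely many $k,s$-axioms via the classical bracket-abstraction trick.

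First I would record the order- and application-structure as regular sequents: reflexivity, transitivity and antisymmetry of $\leq$; functionality of $\mathrm{App}$, via $\mathrm{App}(x,y,z)\land\mathrm{App}(x,y,z')\vdash z=z'$; and monotonicity together with downward-closure of the domain, in the combined form $x'\leq x\land y'\leq y\land\mathrm{App}(x,y,z)\vdash\exists z'.\,z'\leq z\land\mathrm{App}(x',y',z')$. All of these are sequents between regular formulae, so no logical structure outside the regular fragment intervenes.

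For combinatory completeness I would add regular sequents describing the basic combinators $k$ and $s$. The key point is that in a sequent $\phi\vdash_{\vec x}\psi$ the context $\vec x$ is implicitly universally quantified, so asserting that $k$ and $s$ behave correctly \emph{for all} arguments costs only regular sequents: for example $\top\vdash_{x,y}\exists a,z.\,\mathrm{App}(k,x,a)\land\mathrm{App}(a,y,z)\land z\leq x$ together with analogous sequents for $s$ that ensure $sxy$ always converges and that $sxyz$ is defined and bounded above by $xz(yz)$ whenever the latter converges. Each defining clause translates to finitely many regular sequents.

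The main obstacle is to verify that these finitely many axioms really do entail representability of every partial combinatory arrow in the internal sense of Remark~\ref{internal}. This is the classical bracket-abstraction lemma, but carried out in the internal language of a Heyting category rather than over sets: by induction on the combinatory term $t$ one builds, from $k$ and $s$, a closed combinatory term $\lambda^*\vec x.\,t$ whose denotation realizes the partial arrow named by $t$, using only monotonicity of application and the defining inequalities for $k$ and $s$. Since the construction is purely term-based and uses no classical reasoning or choice, it goes through in any Heyting category, and so the listed regular sequents axiomatise exactly the OPCAs.
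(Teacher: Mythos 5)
Your axiomatisation of the OPAS part (order axioms, functionality of $\mathrm{App}$, monotonicity combined with downward closure of the domain) matches the paper's, and the idea of reducing combinatory completeness to finitely many axioms via the $k,s$-basis is also endorsed by the paper. The genuine gap is your decision to introduce $k$ and $s$ as \emph{constants}. In a regular (or Heyting) category a constant is interpreted as a global section $\term\to A$, but the definition of OPCA only demands that the object of realizers $\db f$ of each partial combinatory arrow be \emph{inhabited} (have full support), and Remark~\ref{internal} explicitly warns that $\db f$ may have no global section at all. The paper returns to exactly this distinction in the subsection on projective terminals: ``globally representable'' and ``representable'' coincide only when the terminal object is projective. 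Consequently the models of your theory are the OPASes equipped with globally representable $k$ and $s$, a proper subclass of the OPCAs in a general Heyting category, and the lemma as stated --- that the models are (all) the OPCAs --- fails for your theory.

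The paper's fix is to replace each constant by a unary predicate $F$ together with the regular sequent $\vdash\exists x.\,F(x)$ asserting inhabitation, followed by sequents saying that every element of $F$ realizes the arrow step by step; an OPCA then becomes a model by interpreting $F$ as any inhabited subobject of $\db f$ (whence the subsequent remark that the model structure is not unique). Your bracket-abstraction argument survives this change in spirit --- one shows by induction that the realizer object of each abstracted term is inhabited, using that inhabitation is preserved by the existentials of regular logic --- but it must be run with inhabited predicates rather than with closed terms built from constants.
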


\begin{proof} We easily translate our own definition of OPASes in a regular theory. We use a binary relation $\leq$ and one ternary relation $\alpha$, but write $xy\convergesto z$ instead of $\alpha(x,y,z)$.
\begin{align*}
&\vdash a\leq a & a\leq b, c\leq a &\vdash c\leq b \\ 
a\leq b, b\leq a &\vdash a=b & ab\convergesto c\land ab\convergesto d  &\vdash c=d \\
a\leq b, c\leq d, bd\convergesto e &\vdash \exists f. ac\convergesto f\land f\leq e
\end{align*}
We express that the OPAS represents a partial combinatory function $f:A^n\partar A$ by extending this theory as follows. We add a predicate $F$ to our language, and an axiom that say it is inhabited: $\vdash \exists x.F(x)$. We add a list of axioms to say that if $F(a)$, then $((ax_1)\dots)x_n \convergesto y$ for some $y\leq f(\vec x)$:
\begin{align*}
F(a) &\vdash \exists y_1. ax_1\convergesto y_1\\
F(a), ax_1\convergesto y_1 &\vdash \exists y_2. y_1x_2\convergesto y_2 \\
&\ \vdots \\
F(a), ax_1\convergesto y_1, y_1x_2\convergesto y_2,\dotsm &\vdash \exists y_n. y_n\leq f(\vec x) \land y_{n-1}x_n\convergesto y_n
\end{align*}

We can do this for each partial combinatory function and get a recursively enumerable theory; we can also use the $k,s$-basis of combinatory logic (see \cite{MR0409137}) to get an equivalent finitely axiomatized regular theory. Either way, a model $A$ for these axioms is an OPAS that represents all partial combinatory functions and therefore an OPCA.
\end{proof}

\begin{remark} Every OPCA is a model for this theory, but not always in a unique way. For each partial combinatory $f$ we may interpreted the related predicate $F$ as any inhabited subobject of $\db f$. \end{remark}

\begin{corollary} Regular functors preserve OPCAs. \end{corollary}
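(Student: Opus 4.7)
The plan is to invoke the preceding lemma: OPCAs are precisely the models in a Heyting (indeed any regular) category of the regular theory $T$ that was just written down. So the corollary reduces to the standard fact that regular functors between regular categories preserve models of regular theories, which I would state and justify briefly rather than re-prove from scratch.

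More concretely, let $F:\cat E\to\cat F$ be a regular functor and suppose $A$ is an OPCA in $\cat E$, with order $\leq\hookrightarrow A\times A$ and application graph $\alpha\hookrightarrow A\times A\times A$ satisfying the axioms listed in the lemma. I would first check that $F(A)$ inherits the structure of an OPAS: $F$ preserves products, monomorphisms and pullbacks, so $F(\leq)$ is a subobject of $F(A)\times F(A)=F(A\times A)$ and $F(\alpha)$ a subobject of $F(A)^3$. The order and functionality axioms (reflexivity, transitivity, antisymmetry, uniqueness of application, monotonicity) are expressed by Horn clauses whose truth is a matter of finite-limit diagrams, and these are preserved by any left-exact functor.

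Next I would handle the existential axioms that encode representability of partial combinatory functions and inhabitation of the predicate $F$. In regular logic, $\exists$ is interpreted by the image factorization, and a sequent $\varphi\vdash\psi$ holds in a regular category precisely when a certain subobject inclusion holds between the subobjects $\db\varphi,\db\psi$ obtained from finite limits and images. Because $F$ preserves finite limits and regular epimorphisms, it preserves images (hence existential quantification along any morphism) and it preserves the subobject inclusions that encode the validity of regular sequents. Hence every axiom of the theory $T$ that is satisfied by $(A,\leq,\alpha)$ in $\cat E$ is satisfied by $(F(A),F(\leq),F(\alpha))$ in $\cat F$, together with the interpretation $F(\db f)$ of each predicate $F$ attached to a partial combinatory function.

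By the preceding lemma this means $F(A)$ is an OPCA. The only real subtlety worth flagging is the one already noted in the remark following the lemma: the OPCA structure on $A$ does not single out a canonical interpretation of the auxiliary predicates, only inhabited subobjects of each $\db f$; but since $F$ preserves inhabitation of subobjects (again because it preserves images), any such choice in $\cat E$ transports to a valid choice in $\cat F$, and this is all that is needed.
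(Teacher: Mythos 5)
Your proposal is correct and follows exactly the route the paper intends: the corollary is stated immediately after the lemma exhibiting OPCAs as models of a regular theory, and the paper leaves the deduction implicit precisely because it is the standard fact that regular functors preserve models of regular theories, which you spell out. Your extra care about the non-canonical choice of the auxiliary inhabited predicates matches the paper's own remark and is handled correctly.
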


OPCAs are models for computation. We can view an OPCA $A$ as the set of codes for programs in a functional programming language. The application operator represents the execution of one program on the code of another. For relative realizability we want to apply a limited set of programs to a larger set of codes. This lead to the following generalization.

\begin{definition} An OPCA pair $(A',A)$ is a pair of OPASes, where
\begin{itemize}
\item $A'$ is a subobject of $A$, and application of $A'$ is the restriction of application in $A$
\item $A'$ is closed under the application in $A$. So if $x,y\in A'$ and there is a $z\in A$ such that $xy\convergesto z$, then $z\in A'$.
\item All partial combinatory arrows of $A$ are representable in $A'$. So if $f:U\subseteq A^n \to A$ is combinatory, then $\db f$ intersects $A'$.
\end{itemize}
\end{definition}

Note that if $(A',A)$ is an OPCA pair both $A'$ and $A$ are OPCAs themselves. Also, the last condition is equivalent to the condition that the sets of realizers for the partial combinatory arrows $(x,y)\mapsto x$ and $(x,y,z)\mapsto (xz)(yz)$ intersect $A'$, for reasons outlined \cite{MR0409137}. Finally, if $A$ is an OPCA, then $(A,A)$ is an OPCA pair. For this reason `absolute' realizability is a special case of relative realizability.

\begin{example}[Kleene's second model] \label{Ktwo}
There is a universal partial continuous function $\N^\N\times \N^\N \to \N^\N$ for the product topology on $\N^\N$. With this function $\N^\N$ is a PCA $\mathcal K_2$. The total recursive functions form a subPCA $\mathcal K_2^{\rm rec}$ and $(\mathcal K_2^{\rm rec},\mathcal K_2)$ is an OPCA pair. As we will see form the definition in the next paragraph, this OPCA pair exists in every topos with natural number object.

A partial continuous function $\phi:\N^\N\partar \N$ has an $i\in \N$ for each $x\in\dom \phi$ such that $f(x)=f(x')$ whenever $x'\in\N^N$ and $x_j=x'_j$ for all $j<i$; i.e., these continuous functionals only use a initial segment to determine their output. Form a bijection $\beta$ between $\N$ and the set of finite sequences of natural numbers $\N^*$, we can construct a surjection$\N^\N$ to the set of partial continuous functions $\N^\N\partar\N$. For each $y\in \N^\N$ we let $\phi_y(x)=k$ if $y(\beta^{-1}(x_0,\dotsc, x_j)) = k+1$ for the least $j\in \N$ such that $y(\beta^{-1}(x_0,\dotsc, x_j))>0$, and otherwise undefined. Using the bijection $\N^\N \simeq \N^\N\times \N$, we also get a surjection $\N^\N$ to $\N^\N\partar \N^\N$, a \emph{universal partial continuous function}. When the bijection $\beta$ between $\N$ and $\N^*$ is a recursive function, $\mathcal K_2^{\rm rec}$ is closed under application, and it represents all partial combinatory functions.
\end{example}

\begin{example}[OPCAs of downsets] If $A$ is an OPCA pair, let $\partial A$ be the set of \emph{downsets}, i.e., downward closed subobjects, of $A$. Inclusions order $\partial A$, and $\partial A$ has a partial application operator that satisfies $UV\downarrow W$ if for all $x\in U$ and $y\in V$, $xy\converges$ and if for all $z\in W$ there are $x\in U$ and $y\in V$ such that $xy\convergesto z$. In fact $\partial A$ is a new OPCA. This construction motivates the generalization from PCAs to OPCAs. Like Kleene's second model, this construction is available in any topos.
\end{example}

\begin{example} Another construction of OPCAs uses the regular functors $\Cat{Set}/I\to\Cat{Set}$ that come from \emph{filter quotient constructions} for filters on the set $I$. Because regular functors preserve OPCAs, filter quotients of $I$-indexed families of OPCAs are OPCAs. This construction does not generalize easily to other toposes.
\end{example}

\subsection{Regular Models}
The construction of the realizability toposes solves the following problem. For each Heyting category $\cat E$ and each OPCA pair $(A',A)$ in $\cat E$, we would like to construct a slightly larger category $\cat E[\A]$, where $\A$ is a subOPCA of $A$, that is closed under all partial operators $A^n\partar A$ that are realized by members of $A'$, but not closed under other partial operators. We approach this problem with two dimensional category theory. A \emph{pseudoinitial object} in a 2-category is an object for which there is an up to isomorphism unique arrow to every other object. We construct a 2-category of suitable functors $F:\cat E \to\cat C$ and subobjects $C\subseteq FA$, such that a pseudoinitial object in the 2-category should be like $\cat E[\A]$.

\begin{definition} Let $\cat E$ be a Heyting category, $(A',A)$ an OPCA pair in $\cat E$, $\cat C$ a regular category and $F:\cat E \to \cat C$ a regular functor.  An \emph{$F$-filter} is a subobject $C\leq FA$ that satisfies:
\begin{itemize}
\item If $x\in C$ and $x\leq y$, then $y\in C$.
\item If $x,y\in C$ and $xy\convergesto z$ for some $z\in FA$, then $z\in C$.
\item If $U\subseteq A$ intersects $A'$, then $FU$ intersects $C$.
\end{itemize}
\end{definition}

Regular functor preserve OPAS and filters, because their definition involves only commutative diagrams, pullbacks and images. This also means that for each pair of regular functors $F:\cat E\to\cat C$ and $G:\cat C\to \cat D$ and each $F$-filter $C$ the object $GC$ is a $GF$-filter.

\begin{definition} Let a \emph{regular model} for $(A',A)$ be a regular functor $F:\cat E\to \cat C$ with an $F$-filter. For each regular $G: \cat E\to \cat D$, each $F$-filter $C$ and each $G$-filter $D$ a morphism $(F,C) \to (G:\cat E\to\cat D,D)$ is a regular functor $H:\cat C\to\cat D$ with an isomorphism $\eta:HF\to G$, such that $\eta_A:HFA\to GA$ restricts to an isomorphism between $FC$ and $D$. A \emph{regular relative realizability category} for the pair $(A',A)$ is a \emph{pseudoinitial} regular model i.e.: there is an up to isomorphism unique regular functor from a regular relative realizability category to any regular model. \end{definition}

\begin{remark}
For each regular model $(F,C)$, $n\in\N$ and $U\subseteq A^n$ the set of partial arrows $FU\cap C^n\to C$ contains the images of partial $A'$-representable arrows $U\to A$. In that sense it is a model of the regular theory of a subset of $A$ that is closed under a set of partial operators.
\end{remark}

\begin{theorem} \label{main} There is a pseudoinitial regular model for every OPCA pair in every Heyting category. \end{theorem}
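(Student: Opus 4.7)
The plan is to exhibit an explicit category $\cat E[\A]$ of assemblies over $(A',A)$ together with a canonical regular functor $F\colon\cat E\to\cat E[\A]$ and a canonical $F$-filter, and then to show that an arbitrary regular model $(G,C)$ induces an essentially unique factoring functor. Objects of $\cat E[\A]$ will be pairs $(X,\alpha)$ with $X\in\cat E$ and $\alpha\leq A\times X$ upward closed in the first coordinate, subject to the requirement that $\alpha\cap(A'\times X)\to X$ is a regular epimorphism. A morphism $(X,\alpha)\to(Y,\beta)$ is an arrow $f\colon X\to Y$ in $\cat E$ for which there exists an inhabited ``object of trackers'' $T\leq A'$ satisfying, in the internal language, $ra\converges$ and $(ra,f(x))\in\beta$ whenever $r\in T$ and $(a,x)\in\alpha$.

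Next I would verify that $\cat E[\A]$ is a regular category by constructing finite limits and image factorizations on underlying objects and equipping them with the natural intersections and pushforwards of realizer relations, using combinatory completeness of $A'$ to supply the trackers required for functoriality and pullback stability. I would then set $F(X)=(X,A\times X)$, tracked by the identity combinator, and take $\A\leq FA$ to be the subassembly whose underlying object is the upward closure of $A'$ in $A$, equipped with its tautological realizer structure. Regularity of $F$ is immediate, and the three filter axioms reduce, respectively, to upward closure of $\A$ by construction, closure of $A'$ under application in $A$, and the sub-OPCA property that $A'$ intersects every $\db f$ for combinatory $f$.

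For pseudoinitiality, given a regular model $G\colon\cat E\to\cat D$ with filter $C\leq GA$, I would set $H(X,\alpha)$ equal to the image of the projection $G\alpha\cap(C\times GX)\to GX$ in $\cat D$. Because $G$ preserves regular epimorphisms and the third filter axiom transfers the defining surjection of an assembly across $G$, this image is all of $GX$, which makes $HF\cong G$ and $H\A\cong C$ automatic. A morphism represented by $(f,T)$ is sent to $Gf$, tracked by the inhabited object $GT\cap C$ that the filter axiom produces from the inhabited $T\leq A'$. The main obstacle is to show that $H$ is well-defined and regular: tracker-independence requires that different presentations $(f,T)$ induce the same arrow after passing to the image, while preservation of finite limits and regular epis requires that $C$ inherit combinatory completeness from the filter axioms, so that the trackers needed inside $\cat D$ are actually available. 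Essential uniqueness of $H$ then follows because any regular functor $\cat E[\A]\to\cat D$ compatible with the regular-model structure must compute each $H(X,\alpha)$ as this same regular image.
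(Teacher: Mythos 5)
There is a genuine structural flaw in your category, and it breaks pseudoinitiality. You require of every object $(X,\alpha)$ that the projection $\alpha\cap(A'\times X)\to X$ be a regular epimorphism, i.e.\ that every element of $X$ have a realizer in $A'$, and you take the filter to be the upward closure ${\uparrow}A'$ of $A'$ in $A$. The paper's construction does neither: an assembly only needs realizers in $A$ (the projection $Y\to X$ is epi, with $Y$ \emph{downward} closed in the realizer coordinate --- downward, not upward, closure is what composition of tracked maps needs, since application returns some $y\leq f(\vec x)$), and the filter is $\A=(A,\set\leq)$, whose underlying object is \emph{all} of $A$; the genericity is carried entirely by the realizer relation $\set\leq$. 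Your extra requirement on objects is exactly what lets you claim that $H(X,\alpha)$, the image of $G\alpha\cap(C\times GX)\to GX$, is all of $GX$; but that claim is fatal. If $H$ always returns the whole of $GX$, then $H$ applied to your filter is $G({\uparrow}A')$ sitting inside $GA$ via $G$ of the inclusion, independently of $C$, so the required isomorphism between $H(\text{filter})$ and $C$ as subobjects of $HFA\cong GA$ forces $C=G({\uparrow}A')$. This already fails for $\cat E=\Cat{Set}$, $G=\id_{\Cat{Set}}$ and $C=A$ (a perfectly good $\id_{\Cat{Set}}$-filter) whenever ${\uparrow}A'\neq A$ --- e.g.\ for Kleene's second model with $A'=\mathcal K_2^{\rm rec}$, where the order is discrete and ${\uparrow}A'=A'\neq A$. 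So ``$H\A\cong C$ automatic'' is false, and your model cannot be pseudoinitial.

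The repair is the paper's construction: drop the $A'$-realizability condition on objects, take $\A=(A,\set\leq)$ as the filter, and define $F_C(X,Y)=\im_{Fx}(\pre{Fa}(C))$, a genuine subobject of $FX$ that depends on $C$; for $\A$ it evaluates to the upward closure of $C$, which is $C$ by the first filter axiom, and for $\nabla X$ it evaluates to all of $FX$ because $C$ is inhabited. The price of letting $F_C$ land in proper subobjects is that uniqueness of the extension no longer comes for free from ``$H$ must compute $GX$''; it instead requires showing that every morphism of assemblies is the unique factorization of $\nabla\Dom f$ through the canonical monos into $\nabla\Dom(X,Y)$ and $\nabla\Dom(X',Y')$ (lemma \ref{gen2}), together with the fact that partitioned assemblies, built from $\A$ by pullback, cover everything. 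That step is absent from your argument and is where the real content of pseudoinitiality lies.
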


In the next couple of subsections, we define a category, a functor and a filter, and prove that these form a pseudoinitial regular model.

\subsection{Assemblies} 
This subsection explains the construction and some properties of the \emph{category of assemblies} $\Asm(A',A)$ for an OPCA pair $(A',A)$ in some Heyting category $\cat E$, the construction of a functor $\nabla:\cat E\to\Asm(A',A)$ and a $\nabla$-filter $\A$ that form a pseudoinitial regular model together.

\begin{definition}
An \emph{assembly} is a pair $(X,Y)$ where $X\in \cat E$ and where $Y$ is a subobject of $A\times X$, such that 
\begin{itemize}
\item for all $x\in X$ there is an $a\in A$ such that $(a,x)\in Y$;
\item if $(a,x)\in Y$ and $b\leq a$, then $(b,x)\in Y$.
\end{itemize}

For each pair of assemblies $(X,Y)$ and $(X',Y')$, and each $f:X\to X'$, a $V\subseteq A$ \emph{tracks} or \emph{realizes} $f$ if for all $a\in V$ and $b\in A$, if $(b,x)\in Y$ then $ab\converges$ and $(ab,f(x))\in Y'$. A \emph{morphism} $(X,Y) \to (X',Y')$ is an arrow $f: X \to X'$ for which there exists a subobject $V$ of $A$ that intersects $A'$ and that tracks $f$.

We summarize this by saying the following diagram must commute.
\[ \xymatrix{
V\times Y \ar[d]_{(v,y)\mapsto y} \ar[dr]^{\quad (v,a,x)\mapsto (va,f(x))} \\
Y\ar[d]_{(a,x)\mapsto x} & Y'\ar[d]^{(a,x)\mapsto x} \\
X \ar[r]_f & X'
} \]
\end{definition}

\begin{remark} When developing realizability in a topos, we can present assemblies as $\pow A$-valued functions. We work with binary relations instead, so that we can apply to construction to Heyting categories where $\pow A$ does not exist. \end{remark}

We will prove that the category of assemblies is a Heyting category, after we introduce some extra structure that will help us to do so.

\newcommand\down{\mathord\downarrow}
\begin{remark} Note that if $V$ tracks a morphism, then so does $\down V = \set{a\in A| \exists v\in V.a\leq v}$. 
\end{remark}

\begin{lemma} Assemblies and morphisms form a category. \end{lemma}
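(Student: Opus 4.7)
The plan is to derive most of the category axioms for $\Asm(A', A)$ from those of $\cat E$. Because a morphism of assemblies is an arrow of $\cat E$ together with the existence of some tracker, and composition is that of underlying arrows, associativity and the identity laws are automatic once one shows that identities and composites of trackable arrows are themselves trackable. The whole proof thus reduces to two well-definedness checks.

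For identities, I would pick a realizer $i \in A'$ of the unary partial combinatory arrow $(b) \mapsto b$, which exists because $(A', A)$ is an OPCA pair. The subobject $\down\{i\}$ of $A$ meets $A'$, and given $(b, x) \in Y$ and any $a \leq i$ the monotonicity axiom yields $ab \convergesto z$ with $z \leq ib \leq b$; downward closure of $Y$ in the first coordinate then gives $(z, x) \in Y$, so $\down\{i\}$ tracks $\id_X$.

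For composition, given $f : (X, Y) \to (X', Y')$ tracked by $V$ and $g : (X', Y') \to (X'', Y'')$ tracked by $W$, I would fix a realizer $e \in A'$ of the ternary partial combinatory arrow $(w, v, b) \mapsto w(vb)$ and define, in the internal language,
\[ U = \Set{c \in A | \exists v \in V. \exists w \in W. ewv \convergesto c}. \]
Closure of $A'$ under application makes $U$ meet $A'$: any $v \in V \cap A'$ and $w \in W \cap A'$ produce $ewv \in A'$. To see $U$ tracks $g \circ f$, fix $(b, x) \in Y$; since $v$ tracks $f$, $vb \convergesto y'$ with $(y', f(x)) \in Y'$; since $w$ tracks $g$, $w y' \convergesto y''$ with $(y'', g(f(x))) \in Y''$; and representability of the combinator yields $ewvb \convergesto z \leq y''$, so downward closure of $Y''$ delivers $(z, g(f(x))) \in Y''$.

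The main obstacle is bookkeeping in the internal language of $\cat E$: one must check that the formula defining $U$ really determines a subobject of $A$, and that $ewv$ is a defined element of $A$ once $v \in V \cap A'$ and $w \in W \cap A'$ are chosen. The monotonicity axiom for application together with closure of $A'$ under application, both built into the definition of an OPCA pair, are precisely what propagates existence of partial applications through the argument; beyond that, everything reduces to routine combinatory manipulations.
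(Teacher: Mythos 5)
Your overall strategy --- reduce the category axioms to showing that identities and composites are tracked, using the identity combinator and the combinator $(w,v,b)\mapsto w(vb)$ --- is exactly the paper's. But there is one genuine gap: you begin both verifications by \emph{picking} global elements, a realizer $i\in A'$ of $b\mapsto b$ and a realizer $e\in A'$ of $(w,v,b)\mapsto w(vb)$, justified by the OPCA pair axioms. Those axioms only assert that the objects of realizers $\db{b\mapsto b}$ and $\db{(w,v,b)\mapsto w(vb)}$ \emph{intersect} $A'$, i.e.\ that the intersections are inhabited subobjects; in an arbitrary Heyting category an inhabited subobject need not have a global section (remark \ref{internal} warns of precisely this, and the later subsection on projective terminals exists because the simplification you invoke is only available when the terminal object is projective). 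Consequently $\down\set{i}$, and your $U$, which keeps $e$ fixed externally while quantifying over $v$ and $w$ internally, are not well-defined subobjects of $A$ in the stated generality.

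The repair is short and is what the paper does: take the tracking subobject to be the whole object of realizers, with the realizer bound by an internal existential. For the identity use $V=\db{x\mapsto x}$; for the composite use
$\Set{c\in A|\exists e\in\db{(w,v,b)\mapsto w(vb)}, w\in W, v\in V. (ew)v\convergesto c}$,
which is the paper's $\comb B W V$ in your notation. These intersect $A'$ by the third OPCA pair axiom together with closure of $A'$ under application, and your pointwise computations --- monotonicity propagating $ab\convergesto z\leq b$, downward closure of $Y$ and $Y''$ in the first coordinate --- go through verbatim in the internal language once $i$ and $e$ are internally quantified. With that substitution your argument coincides with the paper's proof.
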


\newcommand\comb{\mathsf}
\begin{proof} 
Let $\comb I = \db{x\mapsto x}$. This combinator intersects $A'$ and tracks $\id_X:(X,Y)\to(X,Y)$. Let $\comb B = \db{(x,y,z)\mapsto x(yz)}$. If $U$ and $V$ intersect $A'$, $U$ tracks $f:(X,Y) \to (X',Y')$ and $V$ tracks $g:(X',Y') \to (X'',Y'')$, then $\comb BVU$ tracks $g\circ f$. \end{proof}

\begin{definition} We denote the \emph{category of assemblies} by $\Asm(A',A)$. \end{definition}

\begin{remark}
Our definition of the category assemblies is complicated, but equivalent to the conventional definition (see \cite{MR2479466}) in the internal language of a topos. If $(X,Y)$ is an assembly, then the projection $\pi_1:Y\to X$ is a family of inhabited downsets of $X$, which can be represented as an inhabited downset valued morphism $X\to\pow A$. Our definition of morphism lets the underlying Heyting category believe there is an element of $A'$ that tracks it.
\end{remark}

\hide{
\begin{lemma} \label{better} For assemblies $(X,Y)$ and $(X',Y')$ and any arrow $f:\Dom(X,Y) \to \Dom(X',Y')$, let 
\begin{align*} &\db{f:(X,Y) \to (X',Y')} = \\ & \Set{a\in A| \forall (b,x)\in Y. \exists (c,y)\in Y'. ab\convergesto c, x\in \dom f \land f(x) = y }\end{align*}
$f$ is a morphism $(X,Y) \to (X',Y')$ if and only if $\db{f:(X,Y) \to (X',Y')}$ intersects $A'$.
\end{lemma}

\begin{proof} We define a useful family of combinatory functions by recursion:
\[ \alpha_0(x) = x \quad \alpha_{n+1}(x_0,\dots, x_{n+1}) = \alpha_n(x_0,\dots x_n)x_{n+1} \]
If $\db{f: (X,Y) \to (X',Y')}$ intersects $A'$, then $(\db{f: (X,Y) \to (X',Y')},\alpha_1)$ tracks $f$, so $f$ is a morphism.  

For each $g:(X,Y)\to (X',Y')$ there is a tracking $(V\subseteq A^n,h)$. 
We build a new tracking for $g$ that has a more suitable form. 
\[ \db h = \Set{ a\in A| \forall \vec x\in \dom h. \exists y\leq h(\vec x).\alpha_{n+1}(a,\vec x)\convergesto y } \]
$(\alpha_{n+1}, \db h\times V)$ is the new tracking.
For all $(a,\vec x)\in \db h\times V$, $b\in A$ and $(c,y)\in Y$, if $b = \alpha_n(a,\vec x)$, then $bc\converges$ and $bc = \alpha_{n+1}(a,\vec x,c) \leq h(\vec x,c)$; therefore $(bc,g(x))\in Y'$. This means that $\alpha_n: \db h\times V \to \db{g:(X,Y)\to (X',Y')}$. Because $\db h\times V$ intersects $(A')^{n+1}$ and $A'$ is closed under application, the subobject $\db{g:(X,Y)\to (X',Y')}$ intersects $A'$.
\end{proof}
}

\begin{remark} Let $\Dom(X,Y) = X$. 
$\Dom:\Asm(A',A) \to \cat E$ is a faithful functor. For an OPCA pair $(A',A)$ in the category of sets this functor is \emph{not} isomorphic to the global sections functor unless $A'=A$. For that reason we use the $\Dom$ of \emph{domain} rather than the $\Gamma$ of \emph{global section} to symbolize this functor.
\end{remark}

This category has quite a bit more structure then just any regular category.

\begin{lemma} \label{heyt} The category of assemblies is a Heyting category. \end{lemma}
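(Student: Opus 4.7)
My plan is to lift each piece of Heyting structure from $\cat E$ to $\Asm(A', A)$ by equipping the chosen underlying objects with realizer relations and producing the required trackers as partial combinatory arrows, which are representable in $A'$ because $(A', A)$ is an OPCA pair. Throughout I would use the combinators $\comb I$ and $\comb B$ from the previous lemma together with a pairing combinator $\pair = \db{(x,y,z)\mapsto zxy}$ and its associated projections, a constant combinator $\comb K = \db{(x,y)\mapsto x}$, and a case-analysis combinator for coproducts; each of these is a partial combinatory arrow and therefore intersects $A'$.

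First I would build the finite limits. The terminal object is $(1, A\times 1)$ with unique arrow tracked by $\comb K$ applied to any representative, and the binary product of $(X, Y)$ and $(X', Y')$ has carrier $X\times X'$ with realizer relation consisting of pairs $(\pair a b, (x, x'))$ for $(a, x)\in Y$ and $(b, x')\in Y'$, downward closed; projections are tracked by the projection combinators. Equalizers inherit the underlying equalizer in $\cat E$ with the restricted realizer relation, so the inclusion is tracked by $\comb I$. Next I would establish the regular and coherent structure: for $f : (X, Y)\to (X', Y')$ with tracker $V\subseteq A$, the underlying image factorization $X\twoheadrightarrow f(X)\hookrightarrow X'$ lifts by equipping $f(X)$ with the realizer relation obtained by applying $V$-realizers to $Y$-realizers and downward-closing, and stability under pullback descends from $\cat E$ because composition of trackers is handled by $\comb B$. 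Subobjects of $(X, Y)$ are presented by pairs $(X_0, Y_0)$ with $X_0\leq X$ in $\cat E$ and $Y_0$ a realizer relation refining $Y\mathbin\wedge (A\times X_0)$; finite joins are computed by joining carriers and unioning realizer relations, with trackers supplied by the case combinator, and the bottom is $(0, 0)$.

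Finally I would produce the right adjoints to pullback, namely implication in each $\Sub(X, Y)$ and the universal quantifier $\forall_f$ along each morphism. For a morphism $f : (X, Y)\to (X', Y')$ and $(X_0, Y_0)\leq (X, Y)$, define $\forall_f(X_0, Y_0)$ to have carrier $\forall_f(X_0)\leq X'$ in $\cat E$ with realizer relation inherited from $Y'$, and analogously use Heyting implication in $\cat E$ for $\Rightarrow$. Because the underlying adjunction is already present in $\cat E$, verifying $f^{-1}\dashv \forall_f$ reduces to exhibiting trackers for the unit and counit; these are built from $\comb I$, $\comb B$ and $\pair$ and are therefore representable in $A'$. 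The main obstacle I expect is pinning down the correct realizer relation on $\forall_f(X_0)$ and on implications: there is some freedom in the choice, and only one choice will make the adjunction actually work, since the counit tracker needs to recover an $(X_0, Y_0)$-realizer from an $f^{-1}\forall_f(X_0, Y_0)$-realizer using only data already visible in $Y'$. Once this is settled, all remaining verifications are routine tracker bookkeeping, and the argument proceeds in the internal language of $\cat E$.
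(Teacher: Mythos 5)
There are two genuine gaps in your plan, both occurring exactly where a realizer relation has to be chosen, and in both cases the choice you write down is the one that fails. First, the image factorization. You equip $f(X)$ with the relation obtained by \emph{applying} $V$-realizers to $Y$-realizers and downward-closing. The paper instead takes the direct image $\exists_{\id\times f}(Y)$, so that a realizer of $y$ is simply a $Y$-realizer of \emph{some} preimage of $y$, untouched. With your choice the first factor $e:(X,Y)\to (f(X),\cdot)$ need not be a regular epimorphism: if $g:(X,Y)\to\psi$ coequalizes the kernel pair with tracker $W$, the underlying factorization exists in $\cat E$, but tracking it requires producing, uniformly from an element of the form $va$, a $\psi$-realizer of $g(x)$; $W$ acts on $a$, and no combinator un-applies $v$ to recover $a$ from $va$. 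Equivalently, as a subobject of $(X',Y')$ your candidate is tracked-above the paper's $\exists_f(X,Y)$ but not conversely, so it is not the least subobject through which $f$ factors. The fix is trivial (keep the old realizers, reindex along $f$; then $\comb I$ tracks $e$ and any coequalizing tracker tracks the factorization), but as stated the construction does not yield the coequalizer.

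Second, and more seriously, $\forall_f$ and implication. Taking carrier $\forall_f(X_0)$ computed in $\cat E$ with ``realizer relation inherited from $Y'$'' cannot be repaired by a cleverer choice among data visible in $Y'$: a $Y'$-realizer of $y$ carries no information about $Y_0$-realizers of the fibre over $y$, so the counit $\pre f(\forall_f(X_0,Y_0))\leq (X_0,Y_0)$ is untrackable. You correctly identify this as the obstacle, but the resolution is not a selection among your candidates --- it requires genuinely new realizers. The paper takes the realizers of $y$ to be codes $a$ such that for every $(b,x)\in Y$ with $f(x)=y$ one has $ab\converges$ and $(ab,x)\in F$; that is, realized implications from $Y$-realizers of preimages to $F$-realizers. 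Correspondingly the carrier is cut down to those $y$ admitting such a uniform $a$, which is in general a proper subobject of the $\forall_f$ computed in $\cat E$, so even your underlying object is wrong. Heyting implication is then not taken from $\cat E$ either but derived as $U\to V=\forall_m\pre m(V)$ for $m$ a mono representing $U$. (A smaller point in the same spirit: for binary joins the realizers must be tagged with elements of $\true$ and $\false$ before being combined, as in the paper's $F\vee G$; a plain union of realizer relations gives the case combinator nothing to branch on.) The rest of your outline --- finite limits via products and equalizers rather than the paper's direct pullback construction, and stability of regular epis --- is sound and matches the paper's strategy.
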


\newcommand\term{\mathbf 1}
\begin{proof} 
We start with finite limits. If $\term$ is terminal, then for each assembly $(X,Y)$ the unique map $!:X\to \term$ is a morphism $(X,Y)\to (\term,A\times \term)$. To help construct pullbacks, let
\begin{align*}
\true &= \db{(x,y)\mapsto x} &
\false &= \db{(x,y)\mapsto y} &
\pair &= \db{(x,y,z)\mapsto zxy} 
\end{align*} 
Given $f:(X,F) \to (Z,H)$ and $g:(Y,G)\to (Z,H)$ let $p:W\to X$ and $q: W\to Y$ be a pullback cone for $f$ and $g$ in $\cat E$.  Then let 
\[ K = \Set{(a,w)\in A\times W|  \begin{array}{l} \forall t\in \true. at\converges, (at,pw)\in F,\\
 \forall f\in \false.af\converges, (af,qw)\in G\end{array}} \] 
Let $\comb H = \db{(x,y)\mapsto yx}$, then $\comb H\true$ tracks $p:(W,K)\to(X,F)$ and $\comb H\false$ tracks $q:(W,K)\to(Y,F)$. Therefore $p$ and $q$ form a commutative square with $f$ and $g$ in the category of assemblies.
If $L$ tracks $r:\xi \to (X,F)$ and $M$ tracks $s:\xi \to (Y,G)$ for any other assembly $\xi$, let $N = \db{(p,x,y,z) \mapsto (p(xz)(yz)}$. There exists a unique factorization $(r,s):\Dom\xi \to W$ through $p$ and $q$ and $N\pair LM$ tracks $(r,s)$. We see both that $\Asm(A',A)$ has all finite limits and that $\Dom$ preserves them.

Next: images. Given $f:(X,Y) \to (X',Y')$ let $\exists_{f}(X,Y) = (\exists_f(X),\exists_{1\times f}(Y))$. By definition $\Dom(\im f(X),\exists_{1\times f}(Y)) = \exists_f(\Dom(X,Y))$, so $\comb I$ tracks $f:(X,Y) \to \exists_{f}(X,Y)$. If $p,q: \xi \to (X,Y)$ is a kernel pair for $f$ in $\Asm(A',A)$ then it is a kernel pair for $f$ in $\cat E$, because $\Dom$ preserves finite limits. If $V$ tracks some $g:(X,Y) \to \psi$ that satisfies $g\circ p = g\circ q$, then it also tracks the factorization of $g$ through the image of $f$. Hence $\exists_f(X,Y)$ is a coequalizer for the kernel pair.

We need to show that regular epimorphisms are stable. An epimorphism $e:(X,Y) \to (X',Y')$ is regular, if $\exists_e(X,Y) \simeq (X',Y')$. Therefore, we can assume that $(X',Y') = (\exists_f(X), \im{\id\times e}(Y))$ without loss of generality. Since $\Dom((\im f(X), \exists_{\id\times e}(Y)) = \exists_e(\Dom(X,Y))$, the functor $\Dom$ preserves regular epimorphisms and is itself regular. 
For any $f:(Z,H)\to (X', Y')$, let $p: (W,K)\to (X,Y)$, $q:(W,K)\to (Z,H)$ be a pullback cone for $e$ and $f$ like the one we constructed above. The arrow $q$ is a regular epimorphism in $\cat E$, because $\Dom$ preserves pullbacks and $\cat E$ is a regular category. Furthermore, $\comb I$ tracks $e$, $\comb H\true$ tracks $p$ and $\comb H\false$ tracks $q$. 
If $V$ tracks $f$, then $\db{(p,v,x) \mapsto px(vx)}\pair V$ tracks $\id_Z: (Z,H)\to \exists_q(W,K)$, while $\comb H\false$ tracks $\id_Z:\exists_{q}(W,K) \to (Z,H)$. So $\exists_{q}(W,K)\simeq (Z,H)$ and $q$ is a regular epimorphism in $\Asm(A',A)$. So pullbacks of regular epimorphism are regular epimorphisms.

We see that $\Asm(A',A)$ is a regular category and that $\Dom:\Asm(A',A)\to\cat E$ is a regular functor. We construct joins of subobjects to show that all subobject posets are lattices. First we show how to represent a subobject of an assembly $(X',Y')$ as a subobject $F\subseteq A\times X$.

A morphisms $m:X\to Y$ is monic if and only if $\im m(X)\simeq X$. Therefore, given a mono $m:(X,Y) \to (X',Y')$ in $\Asm(A',A)$ we have $(X,Y)\simeq (\im m(X),\exists_{\id\times m}(Y))$. Let $\overline Z = \set{x\in X|\exists a\in A.(a,x)\in Z}$ for all $F\subseteq A\times X$. For each subobject $U$ of $(X,Y)$ there is an $F\subseteq A\times X$ such that $\id_{\overline F}: (\overline F,F) \to (X,Y)$ is tracked and such that $\id_{\overline F}:(\overline F,F) \to (X,Y)$ represents $U$. For all $X\in \cat E$ and all $F,G\leq A\times X$, say that $U$ tracks $F\leq G$ if it tracks $\id_{\overline F}:(\overline F,F)\to(\overline G,G)$.

On to joins. For any pair $F,G\leq A\times X$, let 
\[ F\vee G = \Set{ (a,x)\in A\times X| \begin{array}{c}
\exists b\in A,p\in\pair, t\in \true. (b,x)\in F, a\leq ptb \\ \vee\\
\exists b\in A,p\in\pair, f\in \false. (b,x)\in G, a\leq pfb 
\end{array}} \]
Now $\pair\true$ tracks $Y\leq Y\vee Y'$ and $\pair\false$ tracks $Y\leq Y\vee Y'$, therefore $Y\vee Y'$ is an upper bound of $\Set{Y,Y'}$.
If $U$ tracks $Y\leq Z$ and $U'$ tracks $Y'\leq Z$, then $\db{(t,f,u,u',a) \mapsto at[u(af)][u'(af)]} \true\false U U'$ tracks $Y\vee Y' \leq Z$. Therefore $Y\vee Y'$ is the least upper bound.

There is only one assembly $(\bot,\bot)$ over the initial object $\bot$ of $\cat E$, and it is embedded in every other assembly. This is the bottom element of the poset of subobjects over every assembly, which poset we can now call a \emph{lattice} of subobjects.

We now construct right adjoints to the inverse image maps. For each $f:(X,Y) \to (X',Y')$ and $F\leq Y$ let:
\[ \forall_f(F) = \Set{(a,y)\in A\times X' | \forall (b,x)\in Y. f(x)=y \to ab\converges\land (ab,x)\in F }\]
Pullbacks induce the inverse image map. Therefore, if $G\leq A\times X'$ represents a subobject of $(X',Y')$, then the following object represents its inverse image.
\[ f^{-1}(G) = \Set{(a,x)\in A\times X|  \begin{array}{l} \forall t\in \true. at\converges, (at,x)\in Y,\\
 \forall f'\in \false.af\converges, (af',f(x))\in G\end{array}} \]
If $U$ tracks $G \leq \forall_f(F)$, let $h(t,f,u,x) = u(xt)(xf)$ then $\db h \true\false U$ tracks $f^{-1}(G)\leq F$. If $V$ tracks $f^{-1}(G)\leq F$, 
then $\db{(p,v,x,y)\mapsto v(pxy)} \pair V$ tracks $G \leq \forall_f(F)$. We see that $\forall_f$ is right adjoint to $f^{-1}$.

We now have shown that $\Asm(A',A)$ is regular, that subobjects form a lattice and that inverse image maps have both left an right adjoints. We construct Heyting implications form these right adjoints: if $m:(X,Y) \to (X',Y')$ represent a subobject $U$ of $(X',Y')$, and $V$ is another subobject of $(X',Y')$, then $U\to V = \forall_m\pre m(V)$.

We conclude that lattices of subobjects are Heyting algebras and that the inverse image maps have both left and right adjoints. Therefore $\Asm(A',A)$ is a Heyting category.
\end{proof}

On to the functor.

\begin{definition} For each object $X$ in $A$ let $\nabla X = (X,A\times X)$. For each arrow $f:X\to Y$, let $\nabla f = f$.\end{definition}

The arrow $\nabla f$ is a morphism $\nabla X\to\nabla Y$ because $\Dom\nabla = \id_\cat E$ and $\id_A\times f:A\times X\to A\times Y$. The functor $\Dom$ is a faithful $\Asm(A',A) \to \cat E$ and $\nabla$ is a right inverse. In fact $\nabla$ is right adjoint to $\Dom$, because $\comb I$ tracks the inclusion $\id_X:(X,Y)\to\nabla\Dom(X,Y)$ for every assembly $(X,Y)$.

\begin{lemma} The functor $\nabla$ is regular. \end{lemma}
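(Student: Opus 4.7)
The plan is to verify the two ingredients of regularity separately: preservation of finite limits and preservation of regular epimorphisms. The finite-limit half is immediate from the remark just before the statement that $\nabla$ is right adjoint to $\Dom$, so $\nabla$ preserves every limit that exists in $\cat E$; in particular it preserves the terminal object and pullbacks. (One could also check this by hand using $\Dom\nabla=\id_{\cat E}$ and the formulas from the proof of Lemma \ref{heyt}, where the realizer component is simply $A\times(-)$, but the adjointness delivers it for free.)

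For regular epimorphisms, let $e:X\to Y$ be a regular epi in $\cat E$. By the criterion used in the proof of Lemma \ref{heyt}, $\nabla e:\nabla X\to\nabla Y$ is regular epi iff $\im{\nabla e}(\nabla X)\simeq\nabla Y$. The image formula $\im f(X,Y)=(\im f X,\im{\id_A\times f}Y)$ established there, applied to $\nabla e:(X,A\times X)\to(Y,A\times Y)$, gives
\[ \im{\nabla e}(\nabla X) = \bigl(\im e X,\ \im{\id_A\times e}(A\times X)\bigr). \]
Because $e$ is regular epi in $\cat E$, $\im e X=Y$. Because regular epis are pullback-stable in the regular category $\cat E$ and $\id_A\times e$ is the pullback of $e$ along the projection $A\times Y\to Y$, the arrow $\id_A\times e$ is itself regular epi in $\cat E$, so $\im{\id_A\times e}(A\times X)=A\times Y$. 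Hence the image is $(Y,A\times Y)=\nabla Y$, and $\nabla e$ is regular epi in $\Asm(A',A)$.

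The only delicate point is keeping the image formula in $\Asm(A',A)$ straight and invoking stability of regular epis in the base Heyting category $\cat E$; once those are in place no further tracking combinators are needed, because every element of a $\nabla$-assembly is realized by the whole of $A$, so no partiality or $A'$-intersection condition can obstruct the argument.
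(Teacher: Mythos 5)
Your proof is correct and follows essentially the same route as the paper: limits are preserved because $\nabla$ is right adjoint to $\Dom$, and regular epimorphisms are preserved by computing $\im{\nabla e}(\nabla X)=(Y,\im{\id_A\times e}(A\times X))\simeq\nabla Y$ via the image formula. Your extra justification that $\im{\id_A\times e}(A\times X)=A\times Y$ by pullback-stability of regular epis in $\cat E$ is a detail the paper leaves implicit, and it is correct.
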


\begin{proof} In regular categories $e:X\to Y$ is a regular epimorphism if and only if $\exists_e(X)\simeq Y$. So let $e\in \cat E$ be a regular epimorphism. Images lift to $\Asm(A',A)$, and 
\[ \exists_{\nabla e}(\nabla X) = (Y, \exists_{\id_A\times e}(A\times X)) \simeq \nabla Y \]
Therefore $\nabla$ preserves regular epimorphisms. Since $\nabla$ is right adjoint to $\Dom$, it also preserves all limits. That makes it a regular functor.
\end{proof}

\hide{
\begin{remark} By the way, a regular functor between Heyting categories preserves OPCAs, because the functor inflates the object of realizers of each partial representable arrow. Let $(A',A)$ be an OPCA pair in $\cat E$ and let $F:\cat E\to\cat F$ be a regular functor into a Heyting category. For each representable $f: U\subseteq A^n\to A$, let
\[ R(f) = \Set{ (a,\vec x)\in A\times U | \exists y\leq f(\vec x).\alpha_n(a,\vec x)\convergesto y } \]
If $p: A\times U\to A$ is the first projection, then by definition $\db f = \forall_p(R(f))$ and that implies $\pre p(\db f) \subseteq R(f)$.
Just because $F$ preserves finite limits, it preserves OPASes and partial combinatory arrows. For that reason the definition of $R$ makes sense relative to $FA$. But $F$ also preserves regular epimorphisms and this implies $FR(f) = R(Ff)$. From $\pre p(\db f) \subseteq R(f)$ we now deduce $F\db f \subseteq \db{Ff}$.
If $f$ is any partial combinatory arrow, then $\db f$ intersects $A'$. Therefore $\db{Ff}$ intersects $FA'$. So for every partial combinatory arrow $f:FA^n \partar FA$, $\db f$ intersects $FA'$ and this makes $(FA',FA)$ an OPCA pair.
\end{remark}
}

We have a category and we have a functor. Now we need a filter, which is some subobject of $\nabla A$.

\begin{lemma} Let $\set\leq$ be $\Set{ (x,y)\in A^2 | x\leq y }$ and let $\A = (A,\set\leq)$. The identity map $\id_A:\A\to\nabla A$ is a monomorphism that represents a filter on $\A$. \end{lemma}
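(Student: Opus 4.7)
My plan is to verify in turn that $(A,\set\leq)$ is a well-formed assembly, that the underlying identity on $A$ lifts to a monic morphism $\A\to\nabla A$ in $\Asm(A',A)$, and that the resulting subobject satisfies the three axioms of a $\nabla$-filter. The assembly axioms are immediate: $x\leq x$ supplies a realizer $(x,x)\in\set\leq$ for every $x\in A$, and $\set\leq$ is downward closed in its first coordinate by transitivity. The identity combinator $\comb I$ tracks $\id_A:\A\to\nabla A$, because the target $\nabla A=(A,A\times A)$ imposes no nontrivial realizer condition. Monicity follows because $\Dom$ is faithful and $\id_A$ is monic in $\cat E$.

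For the filter conditions I adopt the $F$-representation of subobjects of an assembly from the proof of Lemma~\ref{heyt}: here $\A\leq\nabla A$ is represented by $F=\set\leq$, so a witness of ``$x\in\A$'' is exactly an element $a\leq x$ of $A$. Upward closure then asks for a tracker turning witnesses $a\leq x$ and $x\leq y$ into some $b\leq y$; transitivity makes $\comb I$ suffice. Closure under application asks: given $a\leq x$, $b\leq y$ and $xy\converges$, exhibit some $c\leq xy$. Monotonicity of application and the downward closure of its domain give $ab\converges$ with $ab\leq xy$, so $c=ab$ works, and the tracker is the partial combinatory function $(a,b)\mapsto ab$, which is represented in $A'$ because $(A',A)$ is an OPCA pair.

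The third condition is the substantive one: for any $U\leq A$ intersecting $A'$, the meet $\nabla U\wedge\A$ in $\Sub_{\Asm(A',A)}(\nabla A)$ must be inhabited, i.e.\ its support must equal $1_{\Asm}=\nabla 1$. Unwinding the pullback shows that this meet has underlying object $U$ and $F$-representation $\{(a,x)\in A\times A:x\in U,\ a\leq x\}$, so the image calculation from Lemma~\ref{heyt} reduces the task to producing some $v\in A'$ such that, for every $a\in A$, $va\converges$ and $va\leq x$ for some $x\in U$. Picking a witness $u\in U\cap A'$ (which exists by hypothesis) and setting $v=\mathsf{K}u\in A'$ works: $(\mathsf{K}u)a\converges$ and $(\mathsf{K}u)a\leq u\in U$ for every $a$. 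This last step---translating the external hypothesis ``$U$ intersects $A'$'' into internal inhabitation inside $\Asm(A',A)$ by constructing an $A'$-tracker from a witness in $U\cap A'$---is the main obstacle.
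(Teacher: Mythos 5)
Your proof is correct and follows essentially the same route as the paper's: the assembly axioms come from reflexivity and transitivity of $\leq$, $\comb I$ tracks both the inclusion into $\nabla A$ and upward closure, the representable arrow $(a,b)\mapsto ab$ handles closure under application, and a $\comb K$-style tracker handles the intersection condition. The one caveat is that in a general Heyting category you cannot literally ``pick a witness $u\in U\cap A'$'' as a global element; the paper sidesteps this by taking the whole subobject $\comb K U$ as the tracker (it intersects $A'$ because $\comb K$ and $U$ do and $A'$ is closed under application), and your last step should be read at that subobject level rather than element-wise.
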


\begin{proof} That $\id_A$ is a morphism follows from the fact that $\id_A\times \id_A:\set\leq \to A^2$ is just the inclusion. If $f,g:(X,Y)\to \A$ satisfy $\id_A\circ f= \id_A\circ g$ then $f=g$, so $\id_A$ is a monomorphism, and monomorphisms represent subobjects.

Because $\nabla$ is regular $\nabla\set\leq$ is a partial ordering of $\nabla A$. Relative to this ordering $\A$ is an upward closed subobject. The order $\set\leq$ has two projections $\set\leq \to A$. By pulling $\A$ back along the first projection we get the object of pairs of element of $\nabla A$, where the first is some element $\A$ and the second is a greater element of $\nabla A$. The subobject $\comb I$ tracks the second projection of this pullback to $\A$. This shows $\A$ is upward closed under the ordering $\nabla\set\leq$.

If $U$ is a subobject of $A$ that intersects $A'$, then $\A$ intersects $\nabla U$. This means the the support of the pullback of the inclusions of $\A$ and $\nabla U$ is a terminal object. Using the constructions in the proof of lemma \ref{heyt} we find that the assembly $(\term,\down U)$ represents this support.

Let $\comb K = \db{(x,y)\mapsto x}$. The unique arrow $\id_\term:\term \to\term$ is a morphism $(\term, A)\to (\term,\down U)$, because $U$ intersects $A'$ and $\comb K U$ tracks it. Therefore $\A\land \nabla U$ is inhabited and $\A$ intersects $\nabla U$ if $A'$ intersects $U$.

Let $D\subseteq A^2$ be the domain of the application operator. We intersect $\A\times \A$ with $\nabla D$ by pulling back along the inclusion $\id_D: D \to A^2$. To get a simpler representation, we project down along the inclusion of $(\A\times \A)\cap \nabla D$. This way $\A^2\cap \nabla D \simeq (D, E)$, where
\[ E = \Set{(a,b,c)\in A\times D| \forall t\in \true, f\in \false. at=b, af=c } \] 
Let $\comb G = \db{(t,f,x)\mapsto (xt)(xf)}$. The application operator $\alpha_1:D\to A$ is a morphism $(D,E) \to \A$ because $\comb{GTF}$ tracks it. This means in the internal language of $\Asm(A',A)$ that if $x,y\in \A$ and $xy\converges$, then $xy\in \A$.

The assembly $\A$ is a filter because it is downward closed, it intersects $\nabla U$ when $A'$ intersects $U$ and it is closed under application.
\end{proof}

We have a category $\Asm(A',A)$, a regular functor $\nabla:\cat E\to\Asm(A',A)$ and a $\nabla$-filter $\A\leq \nabla A$, so we have a regular model for $(A',A)$. If this model is pseudoinitial, the common structure of all regular models generates every object and morphism: the base category, images and preimages and the filter. We show this in the next couple of lemmas.

\begin{lemma} \label{gen1} For each assembly $(X,Y)$ let $a:Y\to A$ be the first projection and $x:Y\to X$ be the second projection. 
\[ (X,Y) \simeq \exists_{\nabla x} ((\nabla a)^{-1}(\A)) \]
\end{lemma}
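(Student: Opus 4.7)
The plan is to unwind both sides as concrete assemblies via the explicit formulas for preimages and images recorded in the proof of Lemma~\ref{heyt}, then check that the resulting realizability relation on $A\times X$ agrees with the given $Y$.

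First I would compute $(\nabla a)^{-1}(\A)$ as a subobject of $\nabla Y$. The subobject $\A\leq\nabla A$ is represented by $\Set{(x,y)\in A^2|x\leq y}\subseteq A\times A$, so the preimage formula from the proof of Lemma~\ref{heyt} applied to $\nabla a\colon\nabla Y\to\nabla A$ cuts out the relation on $A\times Y$ given by: $ct\converges$ and $cf'\converges$ for $t\in\true$, $f'\in\false$, and $cf'\leq a(y)$. I would then simplify this to the cleaner relation $F=\Set{(b,y)\in A\times Y|b\leq a(y)}$ by exhibiting tracking realizers both ways. From the official preimage relation to $(Y,F)$ the identity on $Y$ is tracked by any realizer of $c\mapsto cf'$ for a fixed $f'\in\false\cap A'$; the reverse direction is tracked by $\comb K$, since $(\comb K b)t=b$ and $(\comb K b)f'=b\leq a(y)$ whenever $b\leq a(y)$.

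Second I would take the image $\exists_{\nabla x}(Y,F)=(\exists_x Y,\exists_{1_A\times x}F)$ using the image formula. The first assembly axiom makes $x\colon Y\to X$ a regular epi in $\cat E$, so $\exists_x Y=X$, while $\exists_{1_A\times x}F=\Set{(b,x_1)\in A\times X|\exists a_0\in A. (a_0,x_1)\in Y\land b\leq a_0}$. The second assembly axiom (downward closure of $Y$ in the realizer coordinate) gives the inclusion $\exists_{1_A\times x}F\subseteq Y$, and the reverse inclusion follows by taking $a_0=b$.

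To close the argument I would note that $\id_X\colon(X,Y)\to\nabla X$ is monic (tracked by $\comb I$), and so presents $(X,Y)$ as the subobject of $\nabla X$ represented by $Y\subseteq A\times X$; the computation above shows this is exactly the subobject $\exists_{\nabla x}((\nabla a)^{-1}(\A))$. I expect the only nontrivial step to be the first: verifying that the formal preimage, which comes wrapped in $\true/\false$ convergence conditions, really represents the naive downward-closure relation $F$. Once that is in place the image formula is a direct substitution and the downward-closure axiom of assemblies finishes the proof.
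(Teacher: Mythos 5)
Your proposal is correct and follows exactly the route the paper indicates: the paper's ``proof'' is only the one-line remark that the claim can be computed from the pullback and image formulas of Lemma~\ref{heyt}, and you carry that computation out in full, correctly reducing the formal preimage to the downset relation $\Set{(b,y)\mid b\leq a(y)}$ and then using the two assembly axioms to identify the image with $Y$. The only cosmetic point is that in a general Heyting category one should track the simplification step by a subobject such as $\comb H\false$ rather than a ``fixed $f'\in\false\cap A'$'' (which need not exist as a global element), exactly as the paper does elsewhere in Lemma~\ref{heyt}.
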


We can compute this using the constructions for pullbacks an images given in the proof of lemma \ref{heyt}.

A more traditional way to state this lemma is as follows.
\begin{definition} An assembly $(X,Y)$ is \emph{partitioned} if there is an arrow $f:X\to A$ in $\cat E$ such that 
\[ (X,Y)\simeq (\nabla f)^{-1}(\A) \]
\end{definition}

\begin{lemma} Partitioned assemblies cover all assemblies. \end{lemma}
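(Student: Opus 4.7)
The plan is to exhibit, for each assembly $(X,Y)$, a partitioned assembly together with a regular epimorphism onto $(X,Y)$. The construction is forced on us by the preceding lemma \ref{gen1}: we build the candidate cover out of $Y$ itself, regarded as an object of $\cat E$ carrying the two projections $a:Y\to A$ and $x:Y\to X$.

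First I would form the partitioned assembly $P=(\nabla a)^{-1}(\A)$. By the explicit formula for pullbacks in the proof of lemma \ref{heyt}, the underlying object of $P$ is $Y$, and a realizer of $y\in Y$ is any element $b\in A$ that codes (via $\true,\false$) a pair whose second component lies below $a(y)=\pi_1(y)$. By construction $P$ is partitioned in the sense of the preceding definition.

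Next I would check that $\nabla x:\nabla Y\to\nabla X$ restricts to a morphism $P\to (X,Y)$ in $\Asm(A',A)$. This is where the combinatorial content sits, but it is very mild: given $y\in Y$ and a realizer $b$ of $y$ in $P$, decoding $b$ with $\false$ produces some $c\leq a(y)=\pi_1(y)$; since $(\pi_1(y),\pi_2(y))=(\pi_1(y),x(y))\in Y$ and $Y$ is downward closed in its first coordinate, we obtain $(c,x(y))\in Y$. So a single fixed combinator built from $\false$ tracks $\nabla x:P\to(X,Y)$.

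Finally I would invoke lemma \ref{gen1}: it asserts $(X,Y)\simeq\exists_{\nabla x}(P)$, which in the regular category $\Asm(A',A)$ says exactly that the morphism $\nabla x:P\to(X,Y)$ factors as a regular epimorphism onto its image followed by the identity on $(X,Y)$. Hence $\nabla x$ itself is a regular epimorphism from the partitioned assembly $P$ onto $(X,Y)$, and the lemma follows. The only real obstacle is the tracker verification in the previous paragraph; everything else is a direct appeal to lemma \ref{gen1}.
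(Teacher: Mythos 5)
Your proposal is correct and follows exactly the route the paper intends: the lemma is an immediate corollary of Lemma \ref{gen1}, since $(\nabla a)^{-1}(\A)$ is partitioned by definition and $(X,Y)\simeq\exists_{\nabla x}((\nabla a)^{-1}(\A))$ exhibits the required regular epimorphism. The paper leaves the proof implicit; your tracker verification (via $\false$ and downward closure of $Y$) just fills in the detail already contained in the pullback construction of Lemma \ref{heyt}.
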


We will refer to regular epimorphisms from partitioned assemblies to other assemblies as \emph{partitioned covers}. 

\begin{remark} While partitioned assemblies are projective objects in realizability categories over the category of sets and other categories where epimorphisms split, this does not generalize to all toposes, let alone all Heyting categories. \end{remark}

The structure of regular models also generates the class of morphisms of $\Asm(A',A)$. The proof of the following lemma reveals how our definition of morphism works.

\begin{lemma} \label{gen2} Each morphism $f:(X,Y)\to (X',Y')$ is the unique factorization of $\nabla\Dom f$ composed with $\id_X: (X,Y)\to\nabla\Dom(X,Y)$ through $\id_{X'}: (X',Y')\to\nabla\Dom(X',Y')$.
\end{lemma}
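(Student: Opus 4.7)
\emph{Proof plan.} The plan is to reduce everything to the faithfulness of $\Dom:\Asm(A',A)\to\cat E$, which has already been recorded, together with the facts that $\Dom\nabla=\id_{\cat E}$ and that $\Dom$ sends each inclusion $\id_X:(X,Y)\to\nabla\Dom(X,Y)$ to $\id_X$. Once these are on the table, the lemma unfolds in two short moves: existence of a factorization, and its uniqueness as a factorization through a mono.

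First I would verify that $f$ really does factor $\nabla\Dom(f)\circ\id_X$ through $\id_{X'}$. Both $\id_{X'}\circ f$ and $\nabla\Dom(f)\circ\id_X$ are morphisms $(X,Y)\to\nabla\Dom(X',Y')$ in $\Asm(A',A)$, and applying $\Dom$ to either yields $\Dom f$ in $\cat E$; faithfulness of $\Dom$ then forces them to coincide. Hence $f$ itself exhibits the required factorization.

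For uniqueness I would observe that $\id_{X'}:(X',Y')\to\nabla\Dom(X',Y')$ is a monomorphism in $\Asm(A',A)$: its underlying arrow is $\id_{X'}$, which is monic in $\cat E$, and a faithful, finite-limit-preserving functor reflects monos. Any competing factorization $g:(X,Y)\to(X',Y')$ with $\id_{X'}\circ g=\nabla\Dom(f)\circ\id_X=\id_{X'}\circ f$ must therefore equal $f$; alternatively, $\Dom g=\Dom f$ and faithfulness of $\Dom$ finishes the job. The conceptual point—which is really the content of the lemma rather than any technical obstacle—is that it packages the filter-based presentation of objects from lemma \ref{gen1} with this trivial uniqueness, so that every morphism of $\Asm(A',A)$ is reconstructible from its $\Dom$-image and the canonical inclusions into $\nabla$. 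There is no serious obstacle; the work is bookkeeping around $\Dom$.
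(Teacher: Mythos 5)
Your proof is correct, and it is genuinely more economical than the paper's. The paper establishes the identity $\id_{X'}\circ f=\nabla\Dom f\circ\id_X$ by an explicit chase through the regular structure: it covers $(X,Y)$ by the pullback $P$ of $\nabla(U\times Y)$ against $\A^2$ (where $U$ tracks $f$), checks that $(u,a,x)\mapsto(ua,f(x))$ carries $P$ into the corresponding cover $P'$ of $(X',Y')$ using that $\A$ is closed under application, and only then invokes monicity of $\id_{X'}$ for uniqueness, exactly as you do. You shortcut the existence step by noting that both composites are parallel morphisms with the same image under $\Dom$, and that $\Dom$ is faithful --- a fact the paper records and which is immediate, since a morphism of assemblies is by definition an arrow of $\cat E$ equipped with a property rather than extra structure. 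Both steps of your argument are sound (though finite-limit preservation is not needed for a faithful functor to reflect monos; faithfulness alone suffices). What the paper's longer route buys is not a stronger statement but a reusable recipe: in the Existence Theorem the same cover-and-factor construction is replayed inside an arbitrary regular model $(F,C)$, where no faithful forgetful functor to $\cat E$ is available, in order to define $F_Cf$ as the unique factorization of $F\Dom f$. Your argument proves the lemma as stated; just be aware that the paper's detour is doing double duty for that later construction.
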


\begin{proof}
Let $U$ track $f:(X,Y)\to (X',Y')$. According to the definition of morphisms the following diagram commutes and the vertical arrows are regular epimorphisms.
\[ \xymatrix{ U\times Y \ar[dr]^{ (u,a,x)\mapsto (ux,f(x))} \ar@{-|>}[d]_{(u,y)\mapsto y} \\
Y \ar@{-|>}[d]_{(a,x)\mapsto x} & Y' \ar@{-|>}[d]^{(a,x)\mapsto x} \\
X \ar[r]_f & X' }\]

We will use the internal language here to define some pullbacks. Let
\begin{align*}
P &= \Set{(u,(a,x))\in \nabla(U\times Y) | (u,a)\in \A^2 } \\
P' &= \Set{(a,x)\in Y'| a\in \A }
\end{align*}
The assembly $P$ covers $(X,Y)$. The assemblies $\im{\nabla((u,(a,x))\mapsto x)}(P)$ and $\im{(a,x)\mapsto x}(Y)$ are the same subobject of $\nabla X$, because  $\nabla U$ intersects $\A$. The restriction of $(u,a,x)\mapsto (ux,f(x))$ to $P$ lands in $P'$, because $\A$ in closed under application. And $(X',Y') = \im{\nabla((a,x)\mapsto x)}$ by lemma \ref{gen1}.

Consider the following diagram.
\[\xymatrix{ 
P \ar@{-|>}[d]_{(u,(a,x))\mapsto x} \ar[r]^{(u,a,x)\mapsto (ux,f(x))} & P' \ar@{-|>}[d]^{(a,x)\mapsto x} \\
(X,Y) \ar[d]_{\id_X} \ar[r]_f & (X',Y') \ar[d]_{\id_{X'}} \\
\nabla\Dom(X,Y) \ar[r]_{\nabla\Dom f} & \nabla\Dom(X,Y)
}\]
We just proved that the outer square commutes and the lower square commutes by the definition of morphism. The upper square commutes because $\id_{X'}$ is monic.

Conclusion: each morphism of assemblies $f:(X,Y)\to(X',Y')$ equals the unique factorization of $\nabla\Dom f\circ \id_{X}$ over $\id_{X'}$.
\end{proof}

With these lemmas in hand, we can prove that $\Asm(A',A)$ is a pseudoinitial regular model.

\subsection{Existence Theorem}
We this section we will show that $(\nabla,\A)$ is a pseudoinitial regular model. Thus we prove theorem \ref{main}.

\begin{theorem} There is a pseudoinitial model for every OPCA pair in every Heyting category. \end{theorem}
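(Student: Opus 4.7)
The plan is to use Lemmas \ref{gen1} and \ref{gen2} as a uniqueness principle: these lemmas express every assembly and every morphism of $\Asm(A',A)$ in terms of the generating data $\nabla$ and $\A$, so for any regular model $(F:\cat E\to\cat C,C)$ the value of a regular functor $H:\Asm(A',A)\to\cat C$ sending $(\nabla,\A)$ to $(F,C)$ is forced up to canonical isomorphism. This suggests defining $H$ directly by the formulas of those lemmas and then verifying that it really is a regular functor.

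On objects, I would set, for an assembly $(X,Y)$ with projections $a:Y\to A$ and $x:Y\to X$,
\[ H(X,Y) := \im{Fx}\bigl(\pre{(Fa)}(C)\bigr), \]
taken as a subobject of $FX$ in $\cat C$. Inhabitedness of $C$ (a consequence of the filter axiom applied to $\db{\comb I}$) yields $H\nabla X\cong FX$ naturally in $X$, and upward closure of $C$ yields $H\A\cong C$; together these assemble into a natural isomorphism $\eta:H\nabla\to F$ whose component $\eta_A$ restricts to the required iso $H\A\to C$.

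For morphisms I would argue as follows. Given $f:(X,Y)\to(X',Y')$ tracked by some $V\leq A$ intersecting $A'$, the tracking diagram in $\cat E$ yields an arrow $V\times Y\to Y'$, $(v,b,x)\mapsto(vb,f(x))$. Applying $F$ and restricting to $(FV\cap C)\times\pre{(Fa)}(C)$, closure of $C$ under application forces the result to land in $\pre{(Fa')}(C)$, and postcomposing with the image projection gives a map $(FV\cap C)\times\pre{(Fa)}(C)\to H(X',Y')$. Since $FV\cap C$ has terminal support, the projection to $\pre{(Fa)}(C)$ — and hence to $H(X,Y)$ — is a regular epimorphism stable under pullback, so the previous map factors through $H(X,Y)$ to produce an arrow $Hf:H(X,Y)\to H(X',Y')$. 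Because $H(X',Y')\hookrightarrow FX'$ is monic, $Hf$ is uniquely characterized by the condition that its composite with this inclusion equals $Ff$ restricted to $H(X,Y)$, which makes $Hf$ automatically independent of the choice of tracker $V$.

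With $H$ in hand, functoriality is routine combinatory bookkeeping (identities are tracked by $\comb I$, composites by $\comb BVU$), and I would check regularity of $H$ by translating, formula by formula, each of the explicit constructions in the proof of Lemma \ref{heyt} — pullbacks, images, joins of subobjects, partitioned covers — from $\Asm(A',A)$ into $\cat C$, where they continue to make sense because they only use regular structure together with the filter $C$. Essential uniqueness then falls out of Lemmas \ref{gen1} and \ref{gen2}: any other regular functor $H'$ satisfying $H'\nabla\cong F$ and $H'\A\cong C$ has $H'(X,Y)\cong \im{Fx}\pre{(Fa)}(C)$ canonically and the same forced action on morphisms, yielding a natural isomorphism $H\cong H'$. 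The main obstacle will be preservation of finite limits — in particular of pullbacks — together with the verification that the action on morphisms is functorial: both require carefully checking that the ad hoc combinator-based constructions of Lemma \ref{heyt} correspond, under the replacement of $(\nabla,\A)$ by $(F,C)$, to canonical regular-categorical operations in $\cat C$.
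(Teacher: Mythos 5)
Your proposal is correct and follows essentially the same route as the paper: the object map $\im{Fx}(\pre{(Fa)}(C))$, the factorization of the $F$-image of the tracking diagram through the $C$-covered subobjects to define the action on morphisms, and uniqueness via Lemmas \ref{gen1} and \ref{gen2} are exactly the paper's argument. Your added detail on why the factorization exists (terminal support of $FV\cap C$ making the projection a stable regular epi) and on checking regularity is a reasonable filling-in of steps the paper leaves terse.
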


\begin{proof} Given a regular model $(F,C)$ for an OPCA pair $(A',A)$, we choose an object map $F_C$. For each assembly $(X,Y)$, let $a:Y\to A$ and $x:Y\to X$ be the projections. Let $F_C(X,Y)$ be isomorphic to $\im{Fx}(\pre{Fa}(C))$. By definition $U(X,Y) = \exists_x(Y)$, therefore $FU(X,Y) = \exists_{Fx}(Y)$ and $F_C(X,Y)$ is a subobject of $FU(X,Y)$.

While the object map requires a strong form of choice or a small category $\cat E$, once we have this map, there is a unique way to extend it to a functor, thanks to lemma \ref{gen2}. If $U$ tracks $f:(X,Y)\to (X',Y')$, then the following square commutes, and the vertical arrows are epic because $F$ is a regular functor.
\[ \xymatrix{
F(U\times Y) \ar[rrr]^(.55){F((u,a,x)\mapsto(ua,f(x)))} \ar@{-|>}[d]_{(u,(a,x))\mapsto x} &&& FY'\ar@{-|>}[d]^{(a,x)\mapsto x} \\
FX \ar[rrr]_{Ff} &&& FX'
}\]
Because $C$ is a filter, the subobject $\Set{(u,(a,x))\in F(U\times Y) | (u,a)\in C^2 }$ covers $F_C(X,Y)$ and the restriction of $F(g\times f)$ factors through $\Set{(a,x)\in FY| a\in C}$, the subobject of $FY'$ that covers $F_C(X,Y)$. Therefore there is a unique factorization through $(X',Y')$ of $F\Dom f$ restricted to $(X,Y)$. We define $F_Cf$ to be that morphism.

This functor preserves images and preimages by definition and therefore is regular. Also $F_C\nabla X \simeq FX$ and $F_C\A \simeq C$, so this regular functor is a morphism of regular models.

Every regular $G:\Asm(A',A) \to \cod F$ such that $G\A\simeq C$ and $G\nabla\simeq F$ is isomorphic to $F_C$. Pullbacks preserve the isomorphism $F_C(\A)\simeq G\A$, so that the functors have to agree on all partitioned assemblies. The isomorphism $F_C\nabla\simeq G\nabla$, and the relation of each morphism $f$ to $\nabla\Dom f$ now forces the functors to agree on all assemblies.

We conclude that the functor $\nabla: \cat E \to \Asm(A',A)$ and the filter $\A\subseteq \nabla A$ together form a pseudoinitial regular model for every OPCA pair $(A',A)$ in every Heyting category $\cat E$.
\end{proof}

We take this result one step further to prove that certain categories of regular functors are equivalent to certain categories of subobjects.

\begin{definition} For every Heyting category $\cat E$, OPCA pair $(A',A)$, regular category $\cat C$ and regular functor $F:\cat E\to\cat C$, a \emph{regular extension} of $F$ is a regular functor $G:\Asm(A',A)\to \cat C$ with an isomorphism $\phi:G\nabla\to F$. A morphism of regular extensions $(G,\phi)\to(H,\psi)$ is a natural transformation $\eta:G\to H$ that commutes with the isomorphisms, i.e., $\eta\nabla\circ \phi = \psi$.
\end{definition}

\begin{corollary} For a fixed regular $F:\cat E\to \cat C$ there is an equivalence of categories between the poset of $F$-filters, whose ordering is inclusion, and the category of regular extensions of $F$.
 \end{corollary}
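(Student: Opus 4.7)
My plan is to define functors in both directions and check they give an equivalence, then verify the poset structure is matched.

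On objects: sending a filter $C$ to the regular extension $(F_C,\eta^C)$ constructed in the proof of theorem \ref{main} gives one direction. In the other direction, given a regular extension $(G,\phi:G\nabla\to F)$, I would send it to the subobject $\phi_A(G\A)\leq FA$. This subobject is an $F$-filter because $\A$ is a $\nabla$-filter and the three closure conditions defining a filter involve only finite limits, images, and inhabitation of pullback supports, all of which are preserved by the regular functor $G$ and transported along the isomorphism $\phi$.

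For the round trips, $\Psi\Phi(C)\simeq C$ holds by the defining property $F_C\A\simeq C$ established in theorem \ref{main}. For $\Phi\Psi(G,\phi)\simeq(G,\phi)$, I invoke the uniqueness clause of theorem \ref{main}: both $G$ and $F_{\Psi(G)}$ are regular extensions of $F$ whose value on $\A$ realizes the same filter, and the theorem produces an essentially unique isomorphism between them.

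The new content lies in matching morphisms with the inclusion order. Given a morphism $\eta:(G,\phi)\to(H,\psi)$ of regular extensions, the compatibility $\eta\nabla\circ\phi=\psi$ forces $\eta_{\nabla A}$ to be an isomorphism. Because the monomorphism $\A\hookrightarrow\nabla A$ is preserved by both $G$ and $H$, naturality of $\eta$ at the inclusion exhibits $\eta_\A$ as a factorization of $G\A\hookrightarrow G\nabla A$ through $H\A\hookrightarrow H\nabla A$ (after identifying $G\nabla A\simeq H\nabla A\simeq FA$), so $\Psi(G,\phi)\leq\Psi(H,\psi)$ as subobjects of $FA$. Conversely, from $C\leq C'$ I build a natural transformation $F_C\to F_{C'}$ by taking at each assembly $(X,Y)$ the inclusion $\im{Fx}(\pre{Fa}(C))\hookrightarrow\im{Fx}(\pre{Fa}(C'))$ of subobjects of $\exists_{Fx}(FY)$; naturality on arrows follows from lemma \ref{gen2}, since $F_Cf$ and $F_{C'}f$ are both restrictions of the same arrow $F\Dom f$.

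The main obstacle is ensuring at most one morphism exists between any two regular extensions, so that the category of regular extensions really collapses to a poset. This is the crux that makes the equivalence work, and it rests on the same rigidity used in the proof of theorem \ref{main}: since every assembly is covered by a partitioned assembly (lemma \ref{gen1}) and every morphism is a canonical factorization through $\nabla\Dom$ (lemma \ref{gen2}), any natural transformation between regular extensions is determined on partitioned assemblies, hence on pullbacks of $\A$, hence by its restriction to $\A$, which is forced to be the inclusion of subobjects in $FA$.
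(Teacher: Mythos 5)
Your proposal is correct and follows essentially the same route as the paper: the morphism-level correspondence is obtained from the naturality square at the monomorphism $\A\hookrightarrow\nabla A$ (giving an inclusion of filters from a natural transformation) and from inclusions on partitioned assemblies extended along partitioned covers (giving a natural transformation from an inclusion of filters), with uniqueness forced by lemmas \ref{gen1} and \ref{gen2}. The only difference is that you also spell out the object-level round trips via the pseudoinitiality theorem, which the paper leaves implicit; this is a harmless elaboration rather than a different argument.
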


\begin{proof} We first show how natural transformations induce inclusions of filters.
Let $G,H:\Asm(A',A)\to\cat C$ be regular functors, let $\eta: G\to H$ and let $\eta\Delta: G\Delta \to H\Delta$ be an isomorphism of functors. Consider the following naturality square.
\[ \xymatrix{
G\A \ar[r]^{\eta_{\A}} \ar[d]_{\id_A} & H\A \ar[d]^{\id_A}\\
G\nabla A \ar[r]_{\eta_{\nabla A}} & H\nabla A
}\]
Since $G$ and $H$ are both regular, the vertical arrows are monic and the lower arrow is an isomorphism. Therefore $\eta_{\A}$ must be monic too. If there are isomorphisms $\phi: G\nabla \to F$ and $\psi: H\nabla \to F$, and if $\eta\nabla$ commutes with these isomorphisms, then $\eta\nabla$ is an isomorphism. Hence $G\A\subseteq H\A$.

Next we construct a natural transformation from an inclusion of filters. Let $C\subseteq C'$ be $F$-filters. Pullbacks preserve the inclusion $C\subseteq C'$ and since partitioned assemblies are pullbacks, we can define $\eta_P: F_CP\to F_{C'} P$ to be this pulled back inclusion. Each assembly $X$ has a partitioned cover $e:P\to X$, which we use the construct this diagram.
\[ \xymatrix{
F_CP \ar[r]^{\eta_P} \ar@{-|>}[d]_{F_Ce} & F_{C'}P \ar@{-|>}[d]^{F_{C'}e} \\
F_C(X,Y) \ar[d]_{F_C\id_X} & F_{C'}(X,Y) \ar[d]_{F_{C'}\id_{X}} \\
F_C\nabla\Dom(X,Y) \ar[r]^{\sim} & F_{C'}\nabla\Dom(X,Y)
}\]
There is a unique arrow $F_C(X,Y)\to F_C(X',Y')$ that commutes with all the arrows in the diagram, and we define $\eta_{X,Y}$ to equal this arrows. Thus we get a natural transformation $\eta$ for which $\eta\nabla$ is an isomorphism.

The natural transformation $\eta$ we constructed in the last paragraph induces the inclusion $C\subseteq C'$. Also, the diagram above shows that any transformation that induces this inclusion must equal $\eta$. Therefore, there is an equivalence of categories between the poset of $F$-filters and the regular extensions of $F$.
\end{proof}

\subsection{Projective Terminals}
If the terminal object of the underlying Heyting category $\cat E$ is projective, e.g., in the category of sets, then every inhabited set has a global section. This simplifies the construction of the category of assemblies. Since each inhabited object has a section, global sections realize each representable arrow of each OPAS and therefore each partial combinatory function of each OPCA.

\begin{definition} Let $A$ be an OPAS. 
An arrow $f: A^n \to A$ is \emph{globally} representable if $\db f$ (see remark \ref{internal}) has a global section.
\end{definition}

\begin{lemma} In every Heyting category $\cat E$ every globally representable morphism is representable. If the terminal object is projective, any representable morphism is globally representable.
\end{lemma}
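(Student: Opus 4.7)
The plan is to unpack the two directions using only the internal-language reformulation given in remark \ref{internal}: $f$ is representable iff the subobject $\db f \leq A$ is inhabited, while $f$ is globally representable iff $\db f$ admits a global section $1 \to \db f$. Recall that in a regular category, a subobject $S \leq X$ is inhabited precisely when the unique map $S \to 1$ is a regular epimorphism (equivalently $\im{!_S}(S) = 1$).

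For the first direction, suppose $f$ is globally representable, so there is a morphism $s : 1 \to A$ that factors through $\db f$. Then $!_{\db f} : \db f \to 1$ has a section, so it is a split epimorphism; split epimorphisms are regular epimorphisms in any category with finite limits, hence $\db f$ is inhabited and $f$ is representable. This direction requires no hypothesis on $\cat E$.

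For the second direction, assume the terminal object $1$ of $\cat E$ is projective and let $f$ be representable, so that $!_{\db f}: \db f \to 1$ is a regular epimorphism. By definition of projectivity, the identity $\id_1 : 1 \to 1$ lifts across $!_{\db f}$ to a morphism $s: 1 \to \db f$ with $!_{\db f} \circ s = \id_1$. Composing with the inclusion $\db f \hookrightarrow A$ yields a global section of $A$ lying in $\db f$, which shows that $f$ is globally representable.

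No step looks genuinely difficult; the only thing to be careful about is the precise meaning of "inhabited" in the internal language of a Heyting category, which I would spell out once at the start of the proof so that the two implications reduce to, respectively, the trivial observation that split epis are regular epis and the defining lifting property of a projective object against regular epimorphisms.
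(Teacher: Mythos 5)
Your proof is correct and follows the same route as the paper's own (much terser) argument: both directions reduce to the observation that, in a regular category, a subobject is inhabited iff its support map to $\term$ is a regular epimorphism, so a global section of $\db f$ gives inhabitation trivially, and conversely projectivity of $\term$ lifts $\id_\term$ across the support map to produce a global section. The paper states these two steps without elaboration; you have merely filled in the standard details.
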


\begin{proof} Any arrow $f: U\subseteq A^n \to A$ is representable if the following object is inhabited.
\[ \db f = \Set{a\in A| \forall \vec x\in U.\exists y\in A. y\leq f(\vec x)\land ((ax_1)\dots)x_n \convergesto y } \]
If $f$ is globally representable, then $\db f$ has a global section. This makes $\db{f}$ inhabited and therefore $f$ representable. If $g$ is representable and the terminal object is projective, then $\db f$ has a global section, and this section globally represents $f$.
\end{proof}

We can use global representability to construct categories of assemblies for certain pairs of ordered partial applicative structures in categories that have finite limits, but are not necessarily regular or Heyting. In the following lemma we formulate one property global representability that lets us do this.

\begin{lemma} For every finite limit category $\cat C$ and let $\Gamma:\cat C\to \Cat{Set}$ be the global sections functor. For every ordered partial applicative structure $A\in\cat C$, a partial arrow $f:A^n\partar A$ is globally representable in $A$ if and only if $\Gamma f$ is representable in $\Gamma A$. \end{lemma}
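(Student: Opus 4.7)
The plan is to invoke that the global sections functor $\Gamma=\cat C(1,-):\cat C\to\Cat{Set}$ preserves all finite limits. Hence $\Gamma$ carries the OPAS-structure of $A$ --- namely the order $\leq\hookrightarrow A\times A$ and the graph of application, a subobject of $A\times A\times A$ --- to the analogous structure on $\Gamma A$, making $\Gamma A$ an OPAS in $\Cat{Set}$. Because $\Gamma$ preserves and reflects factorisations through monomorphisms, for global sections $a,b,c:1\to A$ one has $a\leq b$ in $\cat C$ iff $\Gamma a\leq\Gamma b$ in $\Gamma A$, and $ab\convergesto c$ in $\cat C$ iff $(\Gamma a)(\Gamma b)\convergesto\Gamma c$ in $\Gamma A$; similarly $\Gamma U=\dom(\Gamma f)$ sits inside $(\Gamma A)^n$ in the expected way. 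This dictionary between global sections in $\cat C$ and elements of $\Gamma A$ is the engine of both directions of the biconditional.

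For the forward direction, a global section $a:1\to\db f$ gives by postcomposition a global section of $A$, hence an element $\Gamma a\in\Gamma A$. Evaluating the defining condition of $\db f$ on global sections shows: for every global section $\vec x:1\to U$ there is a global section $y:1\to A$ with $((ax_1)\dots)x_n\convergesto y$ and $y\leq f(\vec x)$. Applying the dictionary above, these same relations then hold between $\Gamma a,\Gamma\vec x,\Gamma y\in\Gamma A$, so $\Gamma a$ represents $\Gamma f$.

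Conversely, if $a\in\Gamma A$ represents $\Gamma f$, view $a$ as a global section $1\to A$. For each $\vec x\in\Gamma U$, i.e.\ each global section $\vec x:1\to U$, there is a witnessing $y\in\Gamma A$ --- equivalently a global section $y:1\to A$ --- with $((ax_1)\dots)x_n\convergesto y$ and $y\leq\Gamma f(\vec x)$ in $\Gamma A$. The dictionary reflects these relations back to $\cat C$, so $a$ satisfies the external condition for being a global section of $\db f$, and $f$ is globally representable.

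The main subtlety is in reading the defining formula of $\db f$ externally when $\cat C$ is merely a finite limit category; in that setting $\db f$ itself need not exist as an honest subobject built by Heyting operations, and the lemma is best understood as giving the correct finite-limit generalisation of global representability. Either way the entire content of the proof is the preservation of finite limits by $\Gamma$, combined with the identification of global sections of $A$ with elements of $\Gamma A$.
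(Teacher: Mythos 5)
Your proof is correct and follows essentially the same route as the paper's, which compresses the entire argument into the single observation that $\db{\Gamma f}$ is inhabited precisely when $\db f$ has a global section. Your elaboration of the dictionary between global sections of $A$ and elements of $\Gamma A$ via preservation of finite limits, and your explicit remark that the defining condition of $\db f$ must be read externally when $\cat C$ is merely a finite limit category, only spell out what the paper leaves implicit.
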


\begin{proof} The set of realizers $\db{\Gamma f}\subseteq \Gamma A$ is inhabited precisely when $\db f$ has a global section. \end{proof}

One possible definition of a category of assemblies for a global OPCA pair is now the following.

\begin{definition} For any finite limit category $\cat C$ let $\Gamma:\cat E\to\Cat{Set}$ be the global sections functor. A pair of OPASes $A'\subseteq A$ in $\cat C$ is a \emph{global OPCA pair}, if $(\Gamma A,\Gamma A')$ is an OPCA pair. The category of assemblies for the global OPCA pair $(A',A)$ is the fibred product of $U:\Asm(\Gamma A',\Gamma A)\to \Cat{Set}$ and $\Gamma: \cat C \to \Cat{Set}$.
\end{definition}

We return to our own definition of a category of assemblies over arbitrary Heyting categories. Assuming a projective terminal object, we can simplify the definition of a morphism of assemblies.

\begin{lemma} For each morphism $f:(X,Y) \to (X',Y')$ there is a global combinator $r:\term\to A$ such that $(ra,f(x))\in Y'$ for all $(a,x)\in Y$. For every $f':X\to X'$ and every pair of assemblies $(X,Y)$ and $(X',Y')$ and each global section $r':\term\to A'$, if $r'a\converges$ and $(r'a,x)\in Y'$ for all $(a,x)\in Y$, then $f'$ is a morphism. \end{lemma}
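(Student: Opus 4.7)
The plan is to exploit projectivity of $\term$ in order to pass between the ``inhabited-subobject'' formulation of tracking that defines morphisms in $\Asm(A',A)$ and the ``global combinator'' formulation that this lemma advertises. The key principle is that projectivity of $\term$ is precisely the assertion that every inhabited subobject of any object (equivalently, every subobject whose support map to $\term$ is a regular epi) admits a global section.

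For the forward direction, I start from a morphism $f:(X,Y)\to (X',Y')$ and unpack its definition to obtain a subobject $V\subseteq A$ that intersects $A'$ and tracks $f$. The intersection $V\wedge A'$ is inhabited by hypothesis, so by projectivity the support map $V\wedge A'\to\term$ splits. This yields a global section $r:\term\to V\wedge A'$; composing with the inclusion into $A$ produces $r:\term\to A$ lying internally in $V$. The tracking property applied to $r$ immediately delivers $ra\converges$ and $(ra,f(x))\in Y'$ for every $(a,x)\in Y$. As a bonus the combinator $r$ even factors through $A'$.

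For the backward direction, given the global section $r':\term\to A'$ satisfying $(r'a,f'(x))\in Y'$ for all $(a,x)\in Y$ (which is the only type-correct reading of the displayed condition, since $x$ lives in $X$ rather than $X'$), I take $V\subseteq A$ to be the image of the composite $\term\xrightarrow{r'} A'\hookrightarrow A$. This $V$ sits inside $A'$ and comes equipped with a global section, so $V$ certainly intersects $A'$. Internally every element of $V$ equals $r'$, and the hypothesis is then exactly the statement that $V$ tracks $f'$; hence $f'$ is a morphism of $\Asm(A',A)$.

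I do not expect a genuine obstacle here: the lemma is a direct translation of the projectivity hypothesis into the language of categories of assemblies. The only small care required is to draw the global section of the forward direction from the intersection $V\wedge A'$ rather than from $V$ alone, so that the resulting combinator is automatically witnessed inside $A'$.
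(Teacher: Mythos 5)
Your proof is correct and follows essentially the same route as the paper's: the forward direction extracts a global section of $V\wedge A'$ via projectivity of the terminal object, and the backward direction takes the image of $r'$ as the tracking subobject. Your observation that the displayed condition should read $(r'a,f'(x))\in Y'$ is also the right reading of the statement.
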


\begin{proof} For any tracking $U$ of $f$ that intersects $A'$, there is a global section $r:\term\to U\cap A'$ that satisfies our requirements. The subobject $\set{r'}$ tracks $f':(X,Y)\to (X',Y')$, so $f'$ is a morphism.
\end{proof}

This is the definition of morphism of assemblies one finds in other sources, like \cite{MR2479466}. So known categories of assemblies are special cases of our construction. 

The category of assemblies is a pseudoinitial regular model of an OPCA pair. In the next section we will show a similar definition of relative realizability toposes.

\section{Relative Realizability Toposes}\label{RRT}
In this section we assume that the underlying category $\cat E$ is a topos. Under that condition, we can construct a topos out of the category of assemblies.

\begin{definition} For every topos $\cat E$ and every OPCA pair $(A',A)$ in $\cat E$ an \emph{exact model} is a regular functor $F$ from $\cat E$ to an exact category $\cat C$, together with an $F$-filter. A \emph{relative realizability topos} $\RT(A',A)$ is an pseudoinitial exact model.
\end{definition}

\begin{theorem} Relative realizabilty toposes exist for every OPCA pair in every topos. \end{theorem}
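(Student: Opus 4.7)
The plan is to take the \emph{exact completion} $\Asm(A',A)_{ex/reg}$ of the regular category $\Asm(A',A)$ constructed in the previous section, and show it together with the obvious choice of functor and filter is a pseudoinitial exact model. Write $\iota:\Asm(A',A)\to\Asm(A',A)_{ex/reg}$ for the canonical regular embedding. Then $\iota\nabla:\cat E\to\Asm(A',A)_{ex/reg}$ is regular as a composite of regular functors, and $\iota\A$ is an $\iota\nabla$-filter because, as noted earlier in the paper, regular functors preserve filters. So the pair $(\iota\nabla,\iota\A)$ is an exact model, and this is the candidate for $\RT(A',A)$.

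To verify pseudoinitiality I would take an arbitrary exact model $(F:\cat E\to\cat D,C)$ with $\cat D$ exact, and first invoke theorem \ref{main} to produce a regular functor $F_C:\Asm(A',A)\to\cat D$ together with isomorphisms $F_C\nabla\simeq F$ and $F_C\A\simeq C$. Because $\cat D$ is exact, the universal property of the exact completion supplies a regular functor $\widetilde{F_C}:\Asm(A',A)_{ex/reg}\to\cat D$, unique up to isomorphism, with $\widetilde{F_C}\iota\simeq F_C$. Chaining the isomorphisms yields $\widetilde{F_C}(\iota\nabla)\simeq F$ and $\widetilde{F_C}(\iota\A)\simeq C$, so $\widetilde{F_C}$ is a morphism of exact models $(\iota\nabla,\iota\A)\to(F,C)$.

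For uniqueness I would argue in two layers. Given any regular $G:\Asm(A',A)_{ex/reg}\to\cat D$ with $G(\iota\nabla)\simeq F$ and $G(\iota\A)\simeq C$, the composite $G\iota$ is a morphism of regular models from $(\nabla,\A)$ to $(F,C)$, so theorem \ref{main} forces $G\iota\simeq F_C$; then the universal property of the exact completion forces $G\simeq\widetilde{F_C}$. This gives the pseudoinitiality.

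The principal obstacle I anticipate is not any new construction but the invocation of the right universal property of the exact completion: one has to quote (from Carboni--Freyd--Scedrov, already cited in the introduction) that for a regular $\cat B$ and an exact $\cat D$, restriction along $\iota:\cat B\to\cat B_{ex/reg}$ induces an equivalence between regular functors $\cat B_{ex/reg}\to\cat D$ and regular functors $\cat B\to\cat D$. Everything else is bookkeeping: lemma \ref{heyt} guarantees $\Asm(A',A)$ is regular so that the exact completion is defined, and theorem \ref{main} supplies the necessary factorization at the regular level. Note that this existence proof does not show the resulting exact category is actually a topos; that stronger claim justifies the name \emph{relative realizability topos} and will need to be handled separately later in the section.
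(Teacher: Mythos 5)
Your proposal is correct and follows essentially the same route as the paper: form $\Asm(A',A)_{ex/reg}$, observe that the canonical embedding carries $(\nabla,\A)$ to an exact model, and combine the pseudoinitiality of the regular model (theorem \ref{main}) with the universal property of the exact completion of a regular category to get existence and uniqueness of the comparison functor. The only quibble is attribution: the paper takes the ex/reg completion and its reflectivity from Carboni \cite{MR1358759} rather than Carboni--Freyd--Scedrov, but this does not affect the argument.
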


\begin{proof} We start with a construction that turns regular categories into exact ones. The 2-category of exact categories is a reflective subcategory of the 2-ca\-te\-gory of regular categories (see \cite{MR1358759}). This means that for every regular category $\cat C$ there is an exact category $\cat C_{\mathit{ex/reg}}$ and a regular functor $I:\cat C \to\cat C_{\mathit{ex/reg}}$ such that every regular functor from $\cat C$ to an exact category $\cat D$ factors through $I$ up to isomorphism. Categories with this property of $\cat C_{\mathit ex/reg}$ are called \emph{exact completions} of $\cat C$.

Let $\cat E$ be a topos, $\cat D$ an exact category and $F:\cat E\to \cat D$ a regular functor. If $(F,C)$ is an exact model, then  there is an up to isomorphism unique regular functor $F_C: \Asm(A',A) \to \cat D$ such that $F_C\nabla\simeq F$ and $F\A\simeq C$, because exact models are regular models. $F_C$ factors up to isomorphism through exact completions of $\Asm(A',A)$ because its codomain is exact. The regular functor $I:\Asm(A',A) \to \Asm(A',A)_{\textit{ex/reg}}$ creates an exact model $(I\nabla,I\A)$, and we see now that it is pseudoinitial.
\end{proof}

We give a construction for an exact completion of $\Asm(A',A)$ in section \ref{contop}. Before that we want to prove that relative realizability toposes are indeed toposes. We will use a result from Mat\'ias Menni's thesis \cite{Menni00exactcompletions} for this: if a regular category is locally Cartesian closed and has a generic mono, then its exact completion is a topos.

\subsection{Local Cartesian Closure}
Local Cartesian closure means Cartesian closure of all slice categories. We prove that the category of assemblies is locally Cartesian closed in two steps. Firstly we prove that if a Heyting category has a Cartesian closed reflective subcategory, then it is Cartesian closed under some conditions on the reflector. Secondly we prove that for each assembly $(X,Y)$, the slice category $\cat E/X$ is a reflective subcategory of $\Asm(A',A)/(X,Y)$. For each $Z\in\cat E$ the slice $\cat E/Z$ is Cartesian closed because $\cat E$ is a topos, therefore $\Asm(A',A)$ is locally Cartesian closed.

\begin{lemma} \label{CC} Let $\cat E$ be a Heyting category, let $\cat D$ be a Cartesian closed full subcategory and let $L:\cat E\to\cat D$ be a finite limit preserving left adjoint to the inclusion of $\cat D$ into $\cat E$, such that the unit $\eta:L\to 1$ is a natural monomorphism. Then $\cat E$ is Cartesian closed.
\end{lemma}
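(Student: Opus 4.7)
The plan is the classical construction for Cartesian closure of an ambient category from a reflective subcategory with monomorphic unit. Given $X,Y \in \cat E$, the exponential $(LY)^{LX}$ exists in $\cat D$ and hence in $\cat E$ via the full inclusion, and I would cut out $Y^X$ inside it as the subobject picking out those ``$f$'' whose value at each $\eta_X(x)$ lies in (the image of) $Y$. Formally, since $\eta_Y:Y\to LY$ is monic and $\cat E$ is Heyting, I pull $\eta_Y$ back along the composite $(LY)^{LX}\times X \xrightarrow{\id\times\eta_X} (LY)^{LX}\times LX \xrightarrow{\mathrm{ev}} LY$ to a subobject of $(LY)^{LX}\times X$, and then apply $\forall_\pi$ along the projection $\pi$ to $(LY)^{LX}$. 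The resulting object is
\[ Y^X = \Set{ f\in (LY)^{LX} | \forall x\in X.\ \mathrm{ev}(f,\eta_X(x))\in Y }. \]

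To establish the adjunction $\mathrm{Hom}_\cat E(Z\times X,Y)\cong \mathrm{Hom}_\cat E(Z,Y^X)$ for an arbitrary $Z\in\cat E$, I send $f:Z\times X\to Y$ to $\widetilde{Lf}\circ\eta_Z$, where $Lf:LZ\times LX\to LY$ is built using that $L$ preserves products and $\eta$ is natural, and $\widetilde{Lf}:LZ\to(LY)^{LX}$ is its transpose inside $\cat D$. A short unwinding gives $\mathrm{ev}(\widetilde{Lf}(\eta_Z(z)),\eta_X(x))=Lf(\eta_Z(z),\eta_X(x))=\eta_Y(f(z,x))$, so the map factors through $Y^X$. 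Conversely, any $g:Z\to Y^X$ gives a composite $Z\times X\to (LY)^{LX}\times X \to LY$ which, by the defining property of $Y^X$, factors through $\eta_Y:Y\hookrightarrow LY$, producing $f:Z\times X\to Y$.

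The main obstacle will be showing that these two passages are mutually inverse. The easy direction $f\mapsto g\mapsto f'$ yields $\eta_Y\circ f'=\eta_Y\circ f$, hence $f=f'$ because $\eta_Y$ is monic. The subtler direction starts from $g:Z\to(LY)^{LX}$: since the codomain lies in $\cat D$, the universal property of the reflection gives a unique $\bar g:LZ\to(LY)^{LX}$ in $\cat D$ with $g=\bar g\circ\eta_Z$, and its $\cat D$-transpose $\hat g:LZ\times LX\to LY$ must coincide with $Lf$, because both morphisms into $LY\in\cat D$ agree after precomposition with the unit $\eta_{Z\times X}=\eta_Z\times \eta_X:Z\times X\to L(Z\times X)=LZ\times LX$, which is universal for such maps. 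Hence $g=\widetilde{Lf}\circ \eta_Z$, matching the forward construction, and $\cat E$ has an exponential $Y^X$ with the required universal property. Since $\cat E$ already has finite products (being Heyting), this proves Cartesian closure.
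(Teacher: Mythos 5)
Your proposal is correct and follows essentially the same route as the paper: both carve the exponential $Y^X$ out of $(LY)^{LX}$ as the subobject of maps sending each $\eta_X(x)$ into the image of the monic unit $\eta_Y$, and both verify the hom-set bijection using naturality of $\eta$, preservation of products by $L$, and monicity of the unit for one inverse check. The only cosmetic difference is that you invoke the universal property of the reflection unit for the direction $g\mapsto f\mapsto g$, where the paper does an equivalent element-wise computation in the internal language.
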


\begin{proof} 
For simplicity, we will use the validity of first order logic and simply typed $\lambda$-calculus in the internal languages of respectively Heyting and Cartesian closed categories.

We define for all $Y,Z\in\cat E$
\[ Z^Y = \Set{f\in LZ^{LY}| \forall y\in Y.\exists z\in Z. f(\eta_Yy) = \eta_Zz} \]

For all $f:X\to Y^Z$, $x\in X$ and $y\in Y$, there exists a $z\in Z$ such that $f(x)(\eta_Yy) = \eta_Zz$ and because $\eta_Z$ is a monomorphism, this $z$ is unique. So let $f^t(x,y) = z$ if $f(x)(\eta_Yy) = \eta_Zz$ for all $x\in X$, $y\in Y$ and $z\in Z$.

For all $g:X\times Y\to Z$, $x\in X$ and $y\in Y$ we have $\eta_Z\circ g(x,y) = Lg(\eta_Xx,\eta_Yy)$. Note that we use $L(X\times Y) \simeq LX\times LY$ by the way. Because the subcategory is Cartesian closed, we can let $g^t(x) = \lambda y.Lg(\eta_Xx,\eta Yy)$.

For each $f:X\to Y^Z$, $x\in X$ and $y\in Y$ we have $(f^t)^t(x)(\eta_Yy) = \eta_Z z$ if and only if $f(x)(\eta_Yy) = \eta_Zz$. Therefore $(f^t)^t = f$. For each $g:X\times Y\to Z$, $x\in X$ and $y\in Y$ we have $(g^t)^t(x,y) = z$ if and only if $g^t(x)(\eta_Yy) = \eta_Zz$ while $g^t(x)(\eta_Yy) = \eta_Z\circ g(x,y)$. Since $\eta_Z$ is mono we have $(g^t)^t = g$. This means that $Z\mapsto Z^Y$ is right adjoint to $X\mapsto X\times Y$ and that $\cat E$ is Cartesian closed.
\end{proof}

\begin{lemma} \label{cleavage} For each $(X,Y)\in \Asm(A',A)$, there is a full and faithful functor $\cat E/X \to \Asm(A',A)/(X,Y)$ with finite limit preserving left adjoint. \end{lemma}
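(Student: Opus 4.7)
My plan is to construct $F:\cat E/X \to \Asm(A',A)/(X,Y)$ essentially by ``pullback along the inclusion $\id_X:(X,Y)\to \nabla X$''. Explicitly, for an arrow $f:Z\to X$ in $\cat E$ I set
\[ f^*Y = \Set{(a,z)\in A\times Z | (a,f(z))\in Y} \]
and define $F(f)$ to be $f:(Z,f^*Y)\to (X,Y)$. This is a well-formed assembly map tracked by $\comb I$: $f^*Y$ is inhabited over each $z$ because $Y$ is inhabited over $f(z)$, and it is downward-closed because $Y$ is. On a morphism $h:Z_1\to Z_2$ satisfying $f_2h=f_1$ I put $F(h)=h$; the identity $f_2h=f_1$ forces $(a,z)\in f_1^*Y$ if and only if $(a,h(z))\in f_2^*Y$, so $\comb I$ tracks $F(h)$.

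The left adjoint $G$ will be the restriction of $\Dom$ to the slice: $G(g:(W,K)\to (X,Y)) = (g:W\to X)$. To establish $G\dashv F$ I will exhibit a natural bijection between $\mathrm{Hom}_{\cat E/X}(Gg,f)$ and $\mathrm{Hom}_{\Asm(A',A)/(X,Y)}(g,Ff)$. Both sides consist of arrows $h:W\to Z$ in $\cat E$ with $fh=g$; the crucial observation is that any tracker $V\subseteq A$ for $g$ automatically tracks $h$ as a morphism $(W,K)\to (Z,f^*Y)$, because for $(a,w)\in K$ and $v\in V$ we have $va\converges$ with $(va,g(w))\in Y$, and $g(w)=f(h(w))$ says exactly $(va,h(w))\in f^*Y$. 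So no extra realizability data is required to lift $\cat E/X$-morphisms into the slice of $\Asm(A',A)$, and this gives the hom bijection.

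Full faithfulness of $F$ is then immediate, since $GF=\id_{\cat E/X}$ strictly, making the counit of the adjunction an identity. Equivalently, the same observation as above gives fullness directly: any morphism $F(f_1)\to F(f_2)$ in $\Asm(A',A)/(X,Y)$ has an underlying arrow over $X$, and conversely any such arrow is tracked by $\comb I$. Preservation of finite limits by $G$ I will deduce from Lemma \ref{heyt}: $\Dom$ preserves finite limits, and pullbacks in a slice category are computed in the ambient category, so the restriction $G$ inherits preservation of pullbacks; the terminal of $\Asm(A',A)/(X,Y)$ is $\id_{(X,Y)}$, sent by $G$ to $\id_X$, which is terminal in $\cat E/X$.

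I do not expect any genuine obstacle. The only point that deserves a careful check is that my shortcut definition $(Z,f^*Y)$ agrees, up to canonical isomorphism over $(X,Y)$, with the pullback of $\nabla f$ along the mono $\id_X:(X,Y)\to \nabla X$ as constructed by the explicit formula in the proof of Lemma \ref{heyt}; this is a routine calculation with the combinators $\true$, $\false$, $\pair$ and $\comb I$, and is what justifies calling $F$ ``pullback along $\id_X$''.
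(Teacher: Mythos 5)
Your construction is exactly the paper's: the functor sends $f:Z\to X$ to the pullback $(\nabla f)^{-1}(Y)$ (your $f^*Y$ is that pullback written out), and the left adjoint is $\Dom$ restricted to the slice, acting as a finite-limit-preserving reflector. The argument is correct and matches the paper's proof, just with the hom-set bijection and fullness spelled out in more detail.
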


\begin{proof} The functor $\nabla$ is right adjoint to $\Dom$ and the unit of this adjunction $(X,Y) \to \nabla \Dom(X,Y)$ is a monomorphism. For each $(X,Y)\in\Asm(A',A)$, we let $\nabla_(X,Y): \cat E/X \to \Asm(A',A)/(X,Y)$ be the functor that maps $f:Z\to \Dom(X,Y)$ to $(\nabla f)^{-1}(Y)$: the subobject of $\nabla \Dom(X,Y)$ represented by $Y$. This functor is faithful and $\Dom$ acts as reflector $\Asm(A',A)/(X,Y) \to \cat E/X$ that preserves finite limits, and the unit is still a monomorphism.
\end{proof}

\begin{theorem} For each locally Cartesian closed Heyting category $\cat E$ and an OPCA pair $(A',A)$ in $\cat E$, the category of assemblies is a locally Cartesian closed Heyting category. \end{theorem}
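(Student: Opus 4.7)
The plan is to invoke Lemma \ref{CC} in every slice of $\Asm(A',A)$. That $\Asm(A',A)$ itself is a Heyting category was established in Lemma \ref{heyt}, and since any slice of a Heyting category is again a Heyting category, each $\Asm(A',A)/(X,Y)$ is a Heyting category as well. What remains is to prove that each such slice is Cartesian closed.

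Fix an assembly $(X,Y)$. Lemma \ref{cleavage} provides a full and faithful embedding $\nabla_{(X,Y)}\colon \cat E/X \to \Asm(A',A)/(X,Y)$ whose left adjoint, induced by $\Dom$, preserves finite limits, and whose unit is pointwise monomorphic. Since $\cat E$ is locally Cartesian closed, the slice $\cat E/X$ is Cartesian closed. These are exactly the hypotheses of Lemma \ref{CC}, applied with the ambient Heyting category taken as $\Asm(A',A)/(X,Y)$ and the Cartesian closed reflective subcategory as (the essential image of) $\cat E/X$. The conclusion is that $\Asm(A',A)/(X,Y)$ is Cartesian closed.

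Since $(X,Y)$ was arbitrary, every slice of $\Asm(A',A)$ is Cartesian closed, giving local Cartesian closure. Combined with the Heyting structure from Lemma \ref{heyt}, the theorem follows. The only subtlety is a bookkeeping one: one must verify that slice categories of Heyting categories are again Heyting, and that the monic-unit and finite-limit-preservation properties of the top-level adjunction $\Dom \dashv \nabla$ restrict correctly to each slice; both points are standard and essentially already built into Lemma \ref{cleavage}. Unpacking Lemma \ref{CC}'s construction in this setting would yield the familiar realizability-style description of an exponential as the subobject of the underlying $\cat E/X$-exponential cut out by a condition of the form \emph{``there is a realizer tracking this function''}, but no such explicit calculation is needed to conclude.
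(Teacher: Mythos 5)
Your proposal is correct and follows exactly the paper's own route: Lemma \ref{heyt} for the Heyting structure, Lemma \ref{cleavage} to embed the Cartesian closed slice $\cat E/X$ reflectively into $\Asm(A',A)/(X,Y)$, and Lemma \ref{CC} to conclude each slice is Cartesian closed. Your remark that one must also check slices of Heyting categories are Heyting is a point the paper leaves implicit, but it is standard and does not change the argument.
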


\begin{proof} Lemma \ref{heyt} tells us $\Asm(A',A)$ is Heyting. For each assembly $(X,Y)$ lemma \ref{cleavage} embeds the Cartesian closed Heyting category $\cat E/\Dom(X,Y)$ into the Heyting category $\Asm(A',A)/(X,Y)$ in such way that the inclusion has a finite limit preserving left adjoint. Therefore every slice of $\Asm(A',A)$ is Cartesian closed according to lemma \ref{CC}, and that means $\Asm(A',A)$ is a locally Cartesian closed Heyting category.
\end{proof}

\subsection{Generic Monomorphisms}
We construct a generic monomorphism for the category of assemblies.

\begin{lemma} \label{genmon} Let $\cat E$ be a topos and $(A',A)$ an OPCA pair in $\cat E$. Let $D^*A\leq \Omega^A$ be the object of inhabited downward closed subobjects of $A$, and let $\set\in$ be the element-of relation $\Set{(a,U)\in A\times D^*A| a\in U }$. The inclusion $\id_{\Dom(D^*A,\set\in)}: (D^*A,\set\in) \to \nabla D^*A$ is a generic monomorphism.
\end{lemma}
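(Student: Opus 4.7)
The plan is to prove that every monomorphism $m\colon(X_0,Y_0)\hookrightarrow(X,Y)$ in $\Asm(A',A)$ is, up to isomorphism of subobjects, a pullback of $\id_{D^*A}\colon(D^*A,\set\in)\to\nabla D^*A$ along some classifying map $\chi\colon(X,Y)\to\nabla D^*A$. First, I would use the adjunction $\Dom\dashv\nabla$ noted above to identify such a $\chi$ with an $\cat E$-morphism $X\to D^*A$; in the topos $\cat E$ this amounts to an $X$-indexed family of inhabited downward closed subobjects of $A$, so the topos hypothesis (which lets us form $\Omega^A$ and carve $D^*A$ out of it) enters directly here.

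Next, using the canonical subobject representation from the proof of Lemma~\ref{heyt}, I would pick a relation $F\leq A\times X$ with $(X_0,Y_0)\simeq(\overline F,F)$ as subobjects of $(X,Y)$, and take $F$ to be downward closed in its first coordinate. The natural candidate is $\chi(x)=\set{a\in A\mid(a,x)\in F}$; this lies in $D^*A$ whenever $x\in\overline F$, and I would use the (locally Cartesian closed) topos structure of $\cat E$ to extend it to a bona fide morphism $X\to D^*A$ by mapping the points outside $\overline F$ to some fixed inhabited downset of $A$.

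Then, the pullback formula from the proof of Lemma~\ref{heyt} presents the pullback of $(D^*A,\set\in)\hookrightarrow\nabla D^*A$ along $\chi$ as $(X,K)\hookrightarrow(X,Y)$ with
\[ K = \set{(a,x)\in A\times X \mid a\true\in\chi(x)\land a\false\in Y_x}. \]
The final step is to exhibit mutually inverse trackings, built from $\true$, $\false$, $\pair$ and a tracker of $m$, showing that $(X,K)$ and $(X_0,Y_0)$ represent the same subobject of $(X,Y)$.

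The main obstacle will be this last identification: the pullback always has underlying object $X$, whereas $m$ may be supported on a proper subobject $X_0\subsetneq X$. Reconciling the two requires arranging $\chi$ on $X\setminus\overline F$ so that the resulting fibres of $K$ collapse, under tracking equivalence, onto those of $Y_0$ when compared as subobjects of $(X,Y)$. Pushing this through carefully is the computational core of the proof, and it is precisely where the full topos structure of $\cat E$ (rather than just its Heyting structure) is being used.
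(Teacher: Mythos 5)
Your core computation is the same as the paper's: classify a subobject $F\leq A\times X$ by the map $\chi(x)=\Set{a\in A| (a,x)\in F}$ into $D^*A$, pull the inclusion $(D^*A,\set\in)\to\nabla D^*A$ back along it using the pullback formula from lemma \ref{heyt}, and compare the result with $F$ via trackers built from $\true$, $\false$ and $\pair$. The difference is that the paper carries this out only for monomorphisms of the form $\id_{\Dom(X,Y)}:(X,Y)\to(X,Y')$ with $Y\leq Y'$, that is, for subobjects whose support $\overline F$ is all of $X$; in that case $\chi(x)$ is automatically an inhabited downset and the identification of the pullback $Y\wedge Y'$ with $Y$ is immediate.

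The step you flag as the ``main obstacle'' is not a computation to be pushed through but a genuine dead end. Since $\Dom$ preserves pullbacks and the displayed monomorphism has $\Dom$-image $\id_{D^*A}$, the pullback of $(D^*A,\set\in)\to\nabla D^*A$ along \emph{any} morphism $(X,Y)\to\nabla D^*A$ has underlying object all of $X$. If that pullback and $(\overline F,F)$ represented the same subobject of $(X,Y)$, the connecting isomorphism would, after applying $\Dom$, split the inclusion $\overline F\to X$ and hence force $\overline F=X$. So no choice of $\chi$ on the ``points outside $\overline F$'' can make the extra fibres collapse --- and that locus is not even a well-defined object of a general topos, since $\overline F$ need not be complemented, so the case-split definition of $\chi$ is already problematic. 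To complete the argument you must either restrict the genericity claim to monomorphisms with full support, which is what the paper's reduction in its first sentence amounts to, or change the generic object (for instance to allow possibly empty downsets, at the price that the domain of the generic mono is then no longer an assembly). As it stands, your plan establishes exactly the full-support case that the paper establishes, and the announced extension beyond it cannot succeed.
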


\begin{proof} If $m:X\to Y$ is monic, then $\exists_m(X)\simeq X$. Therefore we can focus on monomorphisms of the form $\id_{\Dom(X,Y)}:(X,Y)\to (X,Y')$.

To $Y\leq A\times X$ belongs a characteristic map $y:X\to D^*A\leq \Omega^A$: $y(x) = \Set{a\in A| (a,x)\in Y}$, which by the definition of assemblies is a downward closed set. If we pull back $(D^*A,\set\in)$ along $y$ using the constructions from lemma \ref{heyt}, we get the assembly $(X,Y\land Y')$, where
\[ Y\land Y' = \Set{(a,x)\in A\times x| \forall t\in \true. at\converges\land (at,x)\in Y, \forall f\in \false. af\converges\land (af,x)\in Y' } \]
Since $Y\leq Y'$ we have $Y\land Y'\simeq Y$.
\end{proof}

\begin{theorem}  Let $\cat E$ be a topos and $(A',A)$ an OPCA pair in $\cat E$. The relative realizability topos $\RT(A',A) = \Asm(A',A)_{ex/reg}$ is a topos.
\end{theorem}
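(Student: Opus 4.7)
The plan is to invoke directly Menni's theorem from \cite{Menni00exactcompletions}, which is already cited in the paper: the exact completion of a regular category that is locally Cartesian closed and has a generic monomorphism is a topos. So the proof reduces to checking that $\Asm(A',A)$ satisfies the three hypotheses of Menni's theorem, each of which has been established earlier in this section.

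First I would recall that $\Asm(A',A)$ is regular: this is part of Lemma \ref{heyt}, which in fact shows the stronger statement that it is a Heyting category. Next I would invoke the Local Cartesian Closure theorem above, which tells us that whenever the base $\cat E$ is locally Cartesian closed (in particular, when $\cat E$ is a topos), the category $\Asm(A',A)$ is locally Cartesian closed. Then I would cite Lemma \ref{genmon}, which gives an explicit generic monomorphism $\id:(D^*A,\set\in)\to\nabla D^*A$ using the power object $\Omega^A$ of the topos $\cat E$. With these three ingredients in hand, Menni's theorem delivers that $\Asm(A',A)_{\mathit{ex/reg}}$ is a topos.

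The only step that requires any care is making sure the hypotheses of Menni's theorem, as formulated in \cite{Menni00exactcompletions}, match precisely the properties we have verified; in particular that the generic monomorphism is understood in the same sense (a mono $m$ such that every mono in the category is, up to isomorphism, a pullback of $m$), which is the formulation used in the proof of Lemma \ref{genmon}. Since $\Asm(A',A)_{\mathit{ex/reg}} \simeq \RT(A',A)$ by the previous theorem establishing the existence of the pseudoinitial exact model, the conclusion follows. I do not expect any real obstacle; the substantive work has already been done in Lemmas \ref{heyt} and \ref{genmon} and in the local Cartesian closure theorem, and the present result is essentially a bookkeeping step that packages those facts and passes them to Menni's criterion.
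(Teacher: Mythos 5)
Your proposal is correct and follows exactly the paper's own argument: the paper's proof likewise just cites local Cartesian closure and the generic monomorphism of Lemma \ref{genmon} and then appeals to Menni's criterion. No substantive difference.
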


\begin{proof} The category of assemblies is locally Cartesian closed an has a generic mo\-no\-mor\-phism. This implies that its exact completion is a topos, according to Matias Menni \cite{Menni00exactcompletions}.
\end{proof}

\begin{remark}
Given any assembly $(X,Y)$ let $a:Y\to A$ and $x: Y\to X$ be the projections. Let $\Set\in = \Set{(a,\xi)\in A\times D^*A|a\in\xi }$, and let $b:\set\in \to A$ and $d:\set\in \to D^*A$ be the projections. There is a $y: X\to D^*A$ such that the square in the following commutative diagram is a pullback:
\[\xymatrix{
Y \ar[r]_{(a,y)} \ar@/^/[rr]^{a} \ar[d]_x & \set\in \ar[d]_{d} \ar[r]_{b} & A \\
X \ar[r]_y & D^*A
}\]
Because $\nabla$ and $\Asm(A',A)$ are regular and because of lemma \ref{gen1}, this means:
\[ (X,Y) \simeq \nabla y^{-1} \exists_{\nabla d} \nabla b^{-1}(\A)\]
So the generic monomorphism is the inclusion of $\exists_{\nabla d} \nabla b^{-1}(\A)$ into $\nabla D^*A$. Note the regular epimorphism $d:\nabla b^{-1}(\A) \to \exists_{\nabla d} \nabla b^{-1}(\A)$. It is a generic partitioned cover. If $\cat C$ is regular, $F:\Asm(A',A) \to \cat C$ preserves finite limits, $F\nabla$ is regular and $Fd$ is a regular epimorphism, then $F$ is a regular functor.
\end{remark}

\begin{example} For the OPCA pair $(\mathcal K_2^{\rm rec},\mathcal K_2)$ from example \ref{Ktwo}, a version of which exists in every topos with a natural number object, we now can construct the \emph{Kleene-Vesley topos} $\RT(\mathcal K_2^{\rm rec},\mathcal K_2)$ (see \cite{MR2479466}). This is a topos theoretic version of Kleene and Vesleys intuitionism in \cite{MR0176922}. The lattice of subterminal object is dual to the Medvedev lattice \cite{MR0073542} and has been studied as a model for constructive propositional logic, e.g., in \cite{MR2211183}. \end{example}

\subsection{Exact Completions} \label{contop}
In this section we recall the construction of the exact completion of a regular category. Using this construction we give a concrete description of the relative realizability topos.

\begin{definition} Given a regular category $\cat C$ let a \emph{subquotient} be a pair $(X,E)$ where $X\in\cat C$, $E\subseteq X^2$ and $E$ satisfies:
\[ (x,y)\in E \to (y,x)\in E\quad (x,y),(y,z)\in E \to (x,z)\in E \]
Given any two subquotients $(X,E)$ and $(X',E')$ and two subobjects $F,G\subseteq X\times X'$ let $F\simeq_{E\to E'} G$ if both
\begin{align*}
& (x,y)\in E \to \exists z\in X'. (z,z)\in E' \land (x,z)\in F \land (y,z)\in G, \\
& (x,x)\in E \land (x,y)\in F \land (x,z)\in G \to (y,z)\in E'
\end{align*}
If $F\subseteq X\times X'$ satisfies $F\simeq_{E\to E'} F$, then it is called a \emph{functional relation}. A \emph{morphism of subquotients} $(X,E)\to (X',E')$ is an equivalence class for $\simeq_{E\to E'}$.
\end{definition}

We explain how this definition works. For every subquotient $(X,E)$, the relation $E$ is symmetric and transitive in the internal language of $\cat C$. It defines an equivalence relation on $\Set{x\in X|(x,x)\in E}$. We use this pair to represent that quotient. The relations $\simeq_{E\to E'}$ are symmetric and transitive relation on the poset of subobjects of $X\times X'$. This defines an equivalence relation of an subset too, but this relation is external to $\cat C$. If $F\subseteq X\times X'$ and $F\simeq_{E,E'} F$, then $F$ induces a function form equivalence classes of $E$ to equivalence classes for $E'$. Therefore $F$ represents a morphism between quotients. If $G\subseteq X\times X'$, $G\simeq_{E,E'} G$ and $G\simeq_{E,E'} F$, then $G$ induces the same function as $F$. That is why morphisms $(X,E)\to (X',E')$ are equivalence classes for $\sim_{E\to E'}$.

\begin{lemma} Subquotients and morphisms for a regular category $\cat C$ together form a category $\cat C_{\rm ex/reg}$. This category $\cat C_{\rm ex/reg}$ is an exact completion of $\cat C$. \end{lemma}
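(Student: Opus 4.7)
The plan is to verify the category axioms, then establish regularity and effectiveness of equivalence relations, and finally prove the universal property that characterizes $\cat C_{\rm ex/reg}$ as an exact completion. Throughout I would argue in the internal regular language of $\cat C$, so the notation $(x,y)\in E$ makes sense.

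First, I would show that subquotients and morphisms form a category. The identity on $(X,E)$ is represented by $E$ itself, since symmetry and transitivity of $E$ give $E \simeq_{E\to E} E$. Composition of functional relations $F:(X,E)\to(X',E')$ and $G:(X',E')\to(X'',E'')$ is defined by the image of $F\times_{X'}G$ in $X\times X''$, i.e.\ by $\{(x,z)\mid \exists y.\,(x,y)\in F\wedge (y,z)\in G\}$. A routine calculation in the internal language shows that this composite is again a functional relation, that $\simeq_{E\to E'}$-equivalent representatives yield $\simeq_{E\to E''}$-equivalent composites, and that the identity and associativity laws hold; hence we really have a category, and the assignment $X\mapsto (X,\Delta_X)$, $f\mapsto \gr(f)$ (with $\Delta_X\subseteq X^2$ the diagonal) defines a full, faithful, finite-limit preserving embedding $I:\cat C\to\cat C_{\rm ex/reg}$.

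Second, I would prove that $\cat C_{\rm ex/reg}$ is regular and that every equivalence relation is effective. Finite limits are built out of finite limits in $\cat C$ together with conjunctions of the relations involved: the terminal is $I(1)$, and a pullback of $F:(X,E)\to(Z,H)$ and $G:(Y,H')\to(Z,H)$ lives on $X\times Y$ with equivalence relation given by the pointwise conjunction. A morphism $[F]:(X,E)\to(X',E')$ is a regular epimorphism precisely when its image in $X\times X'$ is cofinal in $E'$ from the right; images and stability of regular epis under pullback follow from the corresponding properties of $\cat C$ applied to the underlying subobjects. For effectiveness, any equivalence relation on $(X,E)$ in $\cat C_{\rm ex/reg}$ is represented by a subobject $R\subseteq X^2$ with $E\subseteq R$ and satisfying the analogues of reflexivity, symmetry and transitivity modulo $E$; the quotient is just $(X,R)$, and the canonical map $(X,E)\to(X,R)$ is the coequalizer of its kernel pair.

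Third, I would check the universal property. Given a regular $F:\cat C\to\cat D$ with $\cat D$ exact, I extend it to $\tilde F:\cat C_{\rm ex/reg}\to\cat D$ by sending $(X,E)$ to the effective quotient in $\cat D$ of the equivalence relation obtained by restricting $FE\subseteq FX\times FX$ to the support $\{x\mid (x,x)\in FE\}$. On morphisms, a functional relation $F\subseteq X\times X'$ maps to the arrow between quotients induced by the span $FX\leftarrow F(\dom)\rightarrow FX'$, which is well defined on $\simeq_{E\to E'}$-equivalence classes by the exactness of $\cat D$. Functoriality, preservation of finite limits and regular epis, and the up-to-isomorphism agreement $\tilde F\circ I\simeq F$ all follow from the fact that $F$ is regular and that effective quotients in $\cat D$ commute with the pieces used to build $\tilde F$. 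Uniqueness up to isomorphism is forced by the fact that every subquotient $(X,E)$ is a quotient of an equivalence relation living already in the image of $I$, so any regular extension must agree with $\tilde F$ on that quotient.

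The main obstacle, I expect, will be bookkeeping in the universal property step: showing that $\tilde F$ is well-defined on equivalence classes of functional relations and that it genuinely preserves finite limits (in particular pullbacks, where one must match a quotient-of-pullbacks against a pullback-of-quotients using effectiveness in $\cat D$). Everything else is an essentially mechanical translation of regular-logic reasoning between $\cat C$ and $\cat C_{\rm ex/reg}$.
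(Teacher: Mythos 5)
Your proposal is correct and follows essentially the same route as the paper: composition by relational composite, identities represented by $E$ itself, the embedding $X\mapsto (X,\Delta_X)$ via graphs, and the extension of a regular $F:\cat C\to\cat D$ by sending $(X,E)$ to the quotient of $FE$ (restricted to its support) in the exact codomain. The only difference is that you also verify regularity and effectivity of equivalence relations in $\cat C_{\rm ex/reg}$ explicitly, a step the paper's proof leaves implicit (deferring to \cite{MR1358759}); your outline of that step is sound.
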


\begin{proof} We compose relations $F\subseteq X\times Y$ and $G\subseteq Y\times Z$ by letting $G\circ F = \Set{(x,z)\in X\times Z| \exists y\in Y. (y,z)\in G, (x,y)\in F }$. If $F\simeq F'$ and $G\simeq G'$ relative to some subquotients, then $F\circ G \simeq F'\circ G'$. For every subquotient $(X,E)$ we have $E\simeq_{E,E} E$ and its equivalence class is an identity morphism.

The functor that sends each object $X$ to the pair $(X,\Delta_X)$, where $\Delta_X$ is the diagonal, and each arrow $f:X\to Y$ to the equivalence class of its graph, is an embedding of $\cat C$.

Finally, if $F:\cat C \to \cat D$ is a regular functor to an exact category, and $(X,E)$ is a subquotient, then $FE$ is an equivalence relation on a subobject of $X$. The subquotient $FX/FE$ exists here because of exactness, and that is where we map $(X,E)$ too. If $G\simeq G$ between $(X,E)$ and $(X',E')$, then composition with $FG$ induces a map $FX/FE \to FX'/FE'$. If $G\simeq G'$ then $FG$ and $FG'$ induce the same map. Thus $F$ factors through the category of subquotients in an up to isomorphism unique way.
\end{proof}

The inclusion $\id_{\Dom(X,Y)}:(X,Y)\to \nabla \Dom(X,Y)$ is a monomorphism in $\Asm(A',A)$. That means every assembly is a subobject of an object in the image of $\nabla$. In turn every subquotient is a subquotient of an object in the image of $\nabla$. If $m:(Y,E)\to\nabla X$ is a monomorphism that represents such a relation, then so does the isomorphic assembly $\im m(Y,E) \simeq (X^2,\im{\id_A\times m}(E))$. Therefore assemblies $(X^2,E)$ that define a subquotients of $\nabla X$ represent all objects of the relative realizability topos. We use these facts to get a simpler construction of relative realizability toposes.

\begin{definition} Let $\cat E$ be a topos and let $(A',A)$ be an OPCA pair. The standard relative realizability topos is defined as follows. The objects are pairs $(X,E\subseteq A\times X^2)$ such the the assembly $(X^2,E)$ is a symmetric and transitive relation on $\nabla X$. A morphism $(X,E) \to (X',E)$ is an isomorphism class of assemblies $(X\times X', Y)$, where $Y$ is a functional relation. \end{definition}

For each OPCA pair, the category of assemblies is the pseudoinitial regular model and the relative realizability topos is the pseudoinitial exact model. That is the main point of our paper. In the next section we explain some consequences of our definitions.

\section{Functors}\label{F}
In this section, we use initial models to find examples of regular functors from relative realizability categories into other categories. We no longer demand that the underlying category is a topos. However, when the underlying category is a topos many of the functors we construct have right adjoints and therefore are inverse images parts of geometric morphisms. For completeness we will also prove the existence of these right adjoints.

The first two subsections deal with geometric morphisms from localic toposes over the base category to relative realizability toposes. The examples we provide there are mostly new. More is known about morphisms between realizability toposes, which are the subject of the last subsection.

\subsection{Points} \label{points}
A \emph{point} of a topos $\cat T$ is a geometric morphism $\Cat{Set}\to\cat T$, where $\Cat{Set}$ is the topos of sets. The inverse image part of a geometric morphism is a regular functor, and this allows us to use our universal property. If $(A',A)$ is an OPCA pair in $\cat E$, then regular models $(F,C)$ where $F$ is a set valued regular functor represent each point of $\RT(A',A)$.

The analysis of set-valued regular models will have to wait for another paper. Here, we focus on regular models of the form $(\id_{\cat E},C)$ in stead. The reason that we hang on the to word `point', is that in the case that $\cat E = \Cat{Set}$, these models correspond to the class of all points of $\RT(A',A)$ that satisfy $f^*\nabla\simeq \id_{\Cat{Set}}$. As geometric morphisms, these are precisely the submorphisms of $\Dom\dashv \nabla:\cat E\to\RT(A',A)$.

For every Heyting category $\cat E$ the identity functor $\id_{\cat E}:\cat E\to\cat E$ is regular. If $(A',A)$ is an OPCA pair in $\cat E$, we can construct regular functors with filters of $A$. A \emph{$\id_{\cat E}$-filter} is a subobject $C$ of $A$ that satisfies:
\begin{itemize}
\item For all $x\in C$ and $y\in A$ if $y\geq x$ then $y\in C$.
\item For all $x,y\in C$ and $z\in A$ if $xy\convergesto z$ then $z\in C$.
\item For all $U\subseteq A$ if $U$ intersects $A'$ then $U$ intersects $C$.
\end{itemize}

\begin{remark} This last condition must be interpreted externally, not in the internal language of $\cat E$. An internal interpretation is possible if $\cat E$ is a topos, but that condition implies $A'\subseteq C$. \end{remark}

For each filter $C\subseteq A$, we construct the $C$-induced regular functor as follows. For each assembly $(X,Y)$ we let
\[ \Dom_C(X,Y) = \Set{ x\in X| \exists c\in C. (c,x)\in Y } \]
The functor then maps $f:(X,Y)\to (X',Y')$ to $\Dom f$ restricted to $\Dom_C(X,Y)$ factored through $\Dom_C(X',Y')$.

Quite surprisingly, all regular functors $G:\RT(A',A) \to \cat E$ that satisfy $G\nabla\simeq \id_{\cat E}$ are inverse image parts of geometric morphisms. Therefore relative realizability toposes can have many points, just like Grothendieck toposes.

\begin{theorem} Let $\cat E$ be a topos, let $(A',A)$ be an OPCA of $\cat E$ and let $C$ be a $\id_{\cat E}$-filter. Then the induced regular functor $\Dom_C:\RT(A',A)\to \cat E$ has a right adjoint. \end{theorem}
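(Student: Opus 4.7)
The plan is to construct an explicit right adjoint $R:\cat E\to\RT(A',A)$ to $\Dom_C$ using the subquotient presentation of the relative realizability topos from section \ref{contop}. For $Z\in\cat E$, define $R(Z)=(Z,E_Z)$ where $E_Z\subseteq A\times Z^2$ is a symmetric-transitive relation crafted so that its $C$-reflexive domain is $Z$ and the induced $C$-equivalence on it is equality, while realizers outside $\downarrow C$ identify arbitrary pairs; concretely, one can take $E_Z$ to satisfy $E_Z(a,z,z')\iff (a\in \downarrow C \to z=z')$ internally, adjusted to meet the assembly covering condition. Verify $R(Z)$ is a valid subquotient by exhibiting $A'$-trackers for symmetry (trivial) and transitivity (using the composition combinator $\mathsf B$), relying on the closure of $C$ under application together with the fact that the standard combinators in $A'$ meet $C$ non-trivially via the filter-intersection axiom. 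Extend $R$ to morphisms by representing graphs as functional relations realized by $\mathsf I$ and a $C$-witness.

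Next, exhibit the counit and unit. By unwinding $\Dom_C$ on the subquotient $R(Z)$, one obtains $\Dom_C R(Z)\cong Z$ canonically, so the counit $\epsilon_Z$ is essentially the identity. For the unit $\eta_{(X,E)}:(X,E)\to R\Dom_C(X,E)$, reduce to the case of a partitioned assembly $(X,Y)=(\nabla f)^{-1}(\A)$ using lemma \ref{gen1}: here $\Dom_C(X,Y)$ is the pullback $f^{-1}(C)$ in $\cat E$, and the unit is a functional relation sending $x$ to its image in $f^{-1}(C)$ when $x\in f^{-1}(C)$ and to an arbitrary chosen element via slack realizers otherwise. Extend to arbitrary objects of $\RT(A',A)$ using partitioned covers and the uniqueness clause of lemma \ref{gen2}. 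Naturality and both triangle identities then follow by routine diagram chases through the partitioned covers.

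The main obstacle will be getting the combinatorial form of $E_Z$ right so that morphisms $(X,Y)\to R(Z)$ in $\RT(A',A)$ correspond precisely to morphisms $\Dom_C(X,Y)\to Z$ in $\cat E$: elements of $X$ outside $\Dom_C(X,Y)$ must be freely mappable thanks to the slack realizers in $E_Z$, while elements inside $\Dom_C(X,Y)$ are rigidly determined. The realizability book-keeping — constructing $A'$-trackers throughout that respect the filter closure of $C$ under application — is delicate; a clean way to organize it is to verify the adjunction first on partitioned assemblies, where $C$-realizability is given by a pullback of $\A$, and only afterwards invoke lemma \ref{gen2} to extend to all objects.
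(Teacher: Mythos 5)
Your overall strategy---an explicit right adjoint built in the subquotient presentation of $\RT(A',A)$, with unit and counit checked directly---is the same as the paper's, but your choice of carrier for $R(Z)$ is where the argument genuinely fails. You set $R(Z)=(Z,E_Z)$ with $E_Z(a,z,z')$ iff $a\in\mathord\downarrow C\to z=z'$, so in particular $R(\emptyset)$ is the initial object of $\RT(A',A)$. But when $C\neq A$ there are non-initial objects sent to $\emptyset$ by $\Dom_C$: the complement $A\setminus C$ is inhabited and downward closed (because $C$ is upward closed), so $(1,(A\setminus C)\times 1)$ is a legitimate, non-initial assembly none of whose realizers lie in $C$, whence $\Dom_C$ of it is empty. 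For such an object $X$ the set $\hom_{\RT(A',A)}(X,R(\emptyset))$ is empty while $\hom_{\cat E}(\Dom_C X,\emptyset)$ is a singleton, so no functor with your object part can be right adjoint to $\Dom_C$. The same defect surfaces in your unit: an element $x$ in the domain of $E$ with no realizer in $C$ must still be sent \emph{somewhere} by the total functional relation $\eta_{(X,E)}$, and ``an arbitrary chosen element via slack realizers'' is not available---the target may be empty, and even when it is inhabited the choice would have to be made naturally in $(X,E)$, which it cannot be.

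The paper's construction differs from yours in exactly this point: the right adjoint $\nabla_C$ carries $Z$ to a partial equivalence relation on $\nabla\Omega^Z$, namely $E_Z=\{(a,f,g)\mid a\in C\to\exists z.\,f=g=\{z\}\}$, not to a relation on $\nabla Z$. The power object supplies a canonical default value: the unit sends $x$ to the singleton $\{[x]_C\}$ of its possibly empty $C$-class $[x]_C=\{y\mid\exists a\in C.\,(a,x,y)\in E\}$, which is always an element of $\Omega^{\Dom_C(X,E)}$, so totality requires no choice and no inhabitation hypothesis; all non-singletons of $\Omega^Z$ are identified by realizers outside $C$ and contribute nothing to $\Dom_C\nabla_C Z\cong Z$. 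A secondary point: you should realize the relation with ``$a\in C\to\cdots$'' rather than ``$a\in\mathord\downarrow C\to\cdots$''. The set $\{a\mid a\in C\to\phi\}$ is already downward closed because $C$ is upward closed, and the transitivity and tracking verifications use precisely that $C$ is upward closed and closed under application: if a composite realizer lies in $C$, then so does everything obtained from it by applying the relevant combinators, which forces the rigid case. (With $\mathord\downarrow C$ this argument breaks, and if every element of $A$ lies below some element of $C$ your relation degenerates to the diagonal, giving $\nabla$ rather than a right adjoint to $\Dom_C$.) Your plan to verify the adjunction on partitioned assemblies and extend via lemma \ref{gen2} is reasonable book-keeping but does not repair the choice of carrier.
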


\begin{proof} We use the construction of the relative realizability topos form subsection \ref{contop} to get a clear picture of $\Dom_C:\RT(A',A)\to\cat E$. As $\Dom_C$ must preserve subquotients, we can construct the functor as follows. For each subquotient $(X,E)$, each $F:(X,E) \to (X',E')$ and each $\xi\in C(X,E)$ we let
\begin{align*}
\Dom_C(X,E) &= \Set{\xi\in \Omega^X | \exists x\in X. \xi = \Set{y\in X|\exists a\in C. (a,x,y)\in E} } \\
\Dom_CF(\xi) &= \Set{ y\in X'| \exists a\in C. (a,x,y)\in F }
\end{align*}

We now construct a functor $\nabla_C:\cat E \to \RT(A',A)$. For each $X\in \cat E$, let \[ E_X = \Set{ (a,f,g) \in A\times (\Omega^X)^2 | a\in C \to \exists x\in X. f=g=\set x} \] The assembly $(X^2,E_X)$ is a partial equivalence relation on $\nabla X$. For any arrow $f:X\to Y$ the morphism $\nabla f$ commutes with the partial equivalence relation of either side. Therefore we get a functor $\nabla_C$ by mapping each $X$ to $(X,E_X)$ and each $f:X\to Y$ to the morphism of subquotients it induces.

By computation we find that $\Dom_C\nabla_CX$ is isomorphic to $X$ for all $X\in\cat E$.
\[ CR_CX = \Set{ \xi\in \Omega^{\Omega^X} | \exists x\in X. \xi=\set{\set x}} \]
Let $e_X:\Dom_C\nabla_CX\to X$ be the inverse of $x\mapsto \set{\set x}$.

For each $(X,E)\in \RT(A',A)$ define $f_{(X,E)}: X\to \Omega^{\Dom_C(X,E)}$ by
\[ f(x) = \Set{\Set{y\in X| \exists a\in C. (a,x,y)\in E }} \]
If $(a,x,y)\in E$ and $a\in C$, then there is an $z\in \Dom_C(X,E)$  such that $f(x) = f(y) = \set z$, namely $z=\Set{y\in X| \exists a\in C. (a,x,y)\in E }$. So $(a,f(x),f(y))\in E_{\Dom_C(X,E)}$, and therefore $f$ is a morphism of the partial equivalence relations. Hence $f_{(X,E)}: (X,E) \to \nabla_C\Dom_C(X,E)$.

For $\xi \in \Dom_(X,E)$ we have $\Dom_Cf_{X,E}(\xi) = \Set{f(x)|x\in \xi} = \set{\set \xi}$. Therefore $e_{\Dom_C(X,E)}\circ \Dom_Cf_{(X,E)} = \id_C$. For $g\in \Omega^X$ we have $f_{\nabla_C X}(g) = \Set{\Set{g}}$. Therefore $\nabla_Ce\circ f_{\nabla_CX}  = \id_{\nabla_C}$. Hence we have an adjunction $\Dom_C\dashv \nabla_C$.
\end{proof}

\begin{remark} It is not clear that all geometric morphisms $f:\cat E \to\RT(A',A)$ satisfy $f^*\nabla\sim \id_{\cat E}$, even in the case that $\cat E =\Cat{Set}$. \end{remark}

\hide{\begin{remark} If $C$ and $C'$ are two $\id_{\cat E}$-filters such that $C\subseteq C'$, then $\Dom_C(X,Y)\subseteq \Dom_{C'}(X,Y)$ for all assemblies $(X,Y)$, and this inclusion is natural. In other words: the inclusion of $\id_{\cat E}$-filters corresponds to the specialization ordering of the geometric morphism, and $A'$ and $A$ represent bottom and top points for this ordering. \end{remark}}

\subsection{Characters} \newcommand{\psh}[1]{\cat E^{#1^{op}}}
We generalize the notion of point from the previous subsection. For each topos $\cat E$ and each OPCA pair $(A',A)$, we consider  geometric morphisms $\psh{P} \to \RT(A',A)$ where $P$ is a preordered object of $\cat E$, and $\psh{P}$ the topos of internal presheaves over $P$. Indirectly, we are looking at how toposes that are localic over $\cat E$ map into $\RT(A',A)$, because all localic toposes embed into a topos of the form $\psh{P}$.

The topos of internal presheaves $\psh{P}$ is constructed as follows. Each internal presheaf is an arrow $p: X \to P$ in $\cat E$, together with a restriction operator $r:\Set{(x,u)\in P\times X| x\leq p(u) } \to X$ that satisfies $p\circ r(x,u) = u$. Each morphism $f:(p,r) \to (p',r')$ is just an arrow $f: X\to X'$ such that $p'\circ f = p$ and $r'\circ f = f\circ r$.

The constant sheaf functor $\Delta:\cat E\to\psh{P}$ has both adjoints and is therefore a regular functor. Let $DP$ be the object of downsets of $P$. The $\Delta$-filters of $(A',A)$ correspond to arrows $A\to DP$.

\begin{definition} Let $P$ be a preordered set and $(A',A)$ and OPCA pair in $\cat E$. A character $\gamma$ is an arrow $A\to DP$ that satisfies:
\begin{itemize}
\item If $x \leq y$ then $\gamma(x)\leq\gamma(y)$.
\item If $xy\convergesto z$ then $\gamma(x)\cap\gamma(y)\leq \gamma(z)$.
\item If $a\in A'$ then $\gamma(a)=P$.
\end{itemize}
\end{definition}

We derive the next corollary from theorem \ref{main}.

\begin{corollary} Characters correspond to regular functors $\RT(A',A)\to \psh{P}$. \end{corollary}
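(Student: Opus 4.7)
The plan is to match the three clauses in the definition of a character with the three clauses in the definition of a $\Delta$-filter, and then invoke theorem \ref{main} in the form of the corollary on regular extensions applied to $F=\Delta:\cat E\to\psh P$.

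First I would use the identification, announced above the definition of character, between subobjects $C\leq\Delta A$ in $\psh P$ and arrows $\gamma:A\to DP$ in $\cat E$: a subpresheaf $C$ corresponds to $\gamma(a)=\{p\in P\mid a\in C(p)\}$, and this lands in downsets precisely because the restriction maps of $C$ are inclusions. Under this identification the first filter clause (upward closure under $\leq$) unfolds to monotonicity of $\gamma$, and the second (closure under application) unfolds to $\gamma(x)\cap\gamma(y)\leq\gamma(xy)$; both are routine readings of the presheaf-level internal language of $\psh P$. The third character clause, $\gamma(a)=P$ for $a\in A'$, is the internal subobject inclusion $\Delta A'\leq C$ in $\psh P$.

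Having matched the clauses, I would invoke the corollary on regular extensions with $F=\Delta$: the poset of $\Delta$-filters is equivalent to the category of regular extensions of $\Delta$ to $\Asm(A',A)$. Because $\psh P$ is a topos and in particular exact, the universal property of $\RT(A',A)=\Asm(A',A)_{\mathit{ex/reg}}$ further lifts each such extension, uniquely up to isomorphism, to a regular functor $\RT(A',A)\to\psh P$ along the inclusion $\Asm(A',A)\to\RT(A',A)$. Composing the two bijections yields the advertised correspondence between characters and the regular functors $\RT(A',A)\to\psh P$ whose restriction through $\nabla$ is $\Delta$.

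The hard part is the third clause. Read purely externally over $\cat E$, the filter condition only forces $\Delta A'\cap C$ to have full support in $\psh P$, which a priori is strictly weaker than $\Delta A'\leq C$. The gap is closed by applying the filter condition uniformly in every slice of $\cat E$: pulled back to $\cat E/A'$, the generic inclusion $\iota:A'\hookrightarrow A$ cuts out a singleton $\{\iota\}\hookrightarrow A$ that tautologically meets $A'$, and the resulting inhabitedness of $\Delta\{\iota\}\cap C$ in $\psh P$ over $\cat E/A'$ forces $\iota(a)\in C(p)$ for every generalised $a\in A'$ and $p\in P$, that is, $\gamma(a)=P$ on $A'$. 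The converse direction is immediate, since $\Delta$ is regular and therefore preserves the full support of $U\cap A'\to 1$. With this subtlety settled the remaining verifications are bookkeeping.
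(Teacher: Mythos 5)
Your main line coincides with the paper's: the chain of adjunctions $\cat E(A,DP)\simeq\Sub(\Delta A)$ identifies arrows $A\to DP$ with subobjects of $\Delta A$, the three character clauses are matched against the three filter clauses, and the pseudoinitiality of $(\nabla,\A)$ together with the exactness of $\cat E^{P^{\mathrm{op}}}$ turns $\Delta$-filters into regular functors $\RT(A',A)\to\cat E^{P^{\mathrm{op}}}$. The paper compresses all of this into one sentence, but it is the same argument, and your unfolding of the first two clauses and of the implication ``character $\Rightarrow$ filter'' (using that $\Delta$ preserves the well-supportedness of $U\cap A'$) is correct.

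The step that fails is your repair of the third clause. You are right that there is a mismatch: the third condition on an $F$-filter is \emph{external} (it quantifies over subobjects $U\leq A$ in $\cat E$ whose intersection with $A'$ is well-supported), whereas the third character condition is the \emph{internal} statement $\Delta A'\leq C$. But your proposed localization to $\cat E/A'$ is not licensed by anything in the definitions: an $F$-filter is defined for one fixed regular functor $F:\cat E\to\cat C$, and there is no axiom asserting that the pulled-back data over $\cat E/A'$ is again a filter, so you cannot apply the third condition to the generic element $\iota$ of $A'$. Worse, if such a localization were available, the identical argument applied to $\id_{\cat E}$-filters would show $A'\subseteq C$ for every such filter, contradicting the paper's own remark in the Points subsection that the internal reading of the third condition is a genuinely stronger requirement than the external one. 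So the argument proves too much; the inhabitedness of $\Delta A'\cap C$ really is all the external condition yields. What survives without further hypotheses is the single implication from characters to $\Delta$-filters and hence to regular functors; to get the bijection asserted by the corollary one must either read the filter condition internally (as the paper implicitly does here) or restrict to filters arising as $\gamma^{-1}$ of some character. Your slice argument as written does not close this gap.
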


\begin{proof} There is a bijection between $\cat E(A,DP)$ and  the subobjects of $\Delta A$:
\[ \cat E(A,DP)\simeq \cat E(A,\Gamma \Omega)\simeq \psh{DP}(\Delta A,\Omega)\simeq \Sub(\Delta A) \]
This bijection turns characters into $\Delta$-filters. \end{proof}

Because the functor $\gamma^*:\RT(A',A)\to\psh{P}$ is regular, we can give an explicit definition.
Let $(X,E)$ be any object and let for all $x\in X$ 
\[\db x_u = \Set{y\in X| \exists a\in A. u\in \gamma(a),(a,x,y)\in E} \]
Let $\gamma^*(X,E) = (X',p,r)$ with 
\[ X' = \Set{ (u, \xi) \in P\times \Omega^X | \exists x\in X.\xi=\db x_u } \]
$p:\gamma^*(X,E)$ is just the projection to the first coordinate. We let $r(\xi,u) = \bigcup_{x\in\xi} \db x_u$; $r$ now satisfies $r(\db x_u,v) = \db x_v$ for $v\leq u$.
Let $f:(X,E) \to (X',E')$ be any functional relation. Let for all $(u,\xi)\in \gamma^*(X,E)$
\[ \gamma_*f(u,\xi) = (u, \Set{ y\in X' | \exists a\in A,x\in \xi. u\in\gamma(a)\land (a,x,y)\in f }) \]

In the case that the underlying category is a topos, the functors that characters induce are not just regular, however.

\begin{theorem} Let $\cat E$ be a topos, $P$ a preordered set and $(A',A)$ an OPCA pair. Characters $A\to DP$ induce geometric morphisms $\psh{P}\to \RT(A',A)$. \end{theorem}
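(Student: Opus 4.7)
The plan is to mirror the proof of the previous theorem (for points), which constructed a right adjoint $\nabla_C$ to $\Dom_C$; the character case is the same idea with the plain filter $C$ replaced by the stage-indexed datum $\gamma$. Since the corollary above already supplies the regular functor $\gamma^*:\RT(A',A)\to\psh{P}$, what remains is to construct a right adjoint $\gamma_*:\psh{P}\to\RT(A',A)$ and verify the triangle identities. I would use the subquotient description of $\RT(A',A)$ from subsection \ref{contop} throughout.

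On objects, given a presheaf $(Y,p,r)\in\psh{P}$, I would define $\gamma_*(Y,p,r)=(\Omega^Y,E_Y)$, where
\[ E_Y = \Set{(a,f,g)\in A\times(\Omega^Y)^2 | \forall u\in\gamma(a).\exists y\in Y.\, p(y)=u \land f\cap Y_{\leq u}=g\cap Y_{\leq u}=\{r(y,v):v\leq u\}} \]
with $Y_{\leq u}=\{z\in Y:p(z)\leq u\}$. This is the stage-indexed generalization of the singleton condition $f=g=\{x\}$ appearing in the points case. Symmetry is immediate; transitivity follows from combinatory completeness of $A$ together with the multiplicative character axiom $\gamma(xy)\supseteq\gamma(x)\cap\gamma(y)$, which produces a $c\in A$ witnessing $(c,f,h)\in E_Y$ whenever $(a,f,g),(b,g,h)\in E_Y$. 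I would extend $\gamma_*$ to morphisms by sending $h:(Y,p,r)\to(Y',p',r')$ to the functional relation $\Omega^Y\times\Omega^{Y'}$ induced by direct image along $h$, and verify routinely that $\gamma_*$ is functorial and preserves the subquotient equivalence $\simeq_{E\to E'}$.

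Once $\gamma_*$ is in hand, the unit $\eta_{(X,E)}:(X,E)\to\gamma_*\gamma^*(X,E)$ and counit $\epsilon_{(Y,p,r)}:\gamma^*\gamma_*(Y,p,r)\to(Y,p,r)$ are constructed in direct analogy with the points theorem. The unit sends $x\in X$ to the formal singleton $\Set{\db{x}_u|u\in P}\in\Omega^{\gamma^*(X,E)}$, tracked by the identity combinator by virtue of the third character axiom $\gamma(a)=P$ for $a\in A'$; the counit at stage $u$ sends the equivalence class of $f$ to the unique section of $Y$ at $u$ that $f$ picks out. Both are shown to be invertible by the same combinatorial formulas used for points, and the triangle identities reduce to those same calculations. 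The main obstacle I anticipate is pinning down the precise formula for $E_Y$ so that it is simultaneously a legitimate subobject in the ambient topos $\cat E$, symmetric and transitive on $\nabla\Omega^Y$, and yields $\gamma^*\gamma_*(Y,p,r)\cong(Y,p,r)$ on the nose, including agreement with the restriction operator $r$. The three character axioms are tailored to make exactly these three verifications go through, but the presheaf indexing adds nontrivial bookkeeping over the bare points case, and getting the compatibility with $r$ correct is the delicate point.
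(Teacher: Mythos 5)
You follow the same overall architecture as the paper: take the regular functor $\gamma^*$ for granted, build an explicit right adjoint $\gamma_*$ on the subquotient presentation of $\RT(A',A)$, and verify the adjunction via unit and counit. Your formula for $\gamma_*$ is genuinely different, though. The paper keeps the total space $Y$ of the presheaf as the carrier and uses the relation $\{(a,x,y)\mid \gamma(a)\subseteq e(x,y)\}$ with $e(x,y)=\{u\in P\mid u\leq px,\ u\leq py,\ r(x,u)=r(y,u)\}$, while you pass to $\Omega^Y$ and carve out stagewise orbits. Your choice costs the bookkeeping you anticipate, but it buys something real: an element of the paper's $\gamma_*(Y,p,r)$ can only name a compatible family that is the restriction-orbit of a single element of $Y$, whereas your $E_Y$ also admits compatible families indexed by a downset of stages that are not of that form. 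Such families are needed already for $\gamma_*$ to preserve the terminal object when $P$ has no greatest element (e.g.\ $P=\N$), so the change of carrier is substantive rather than cosmetic.

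The genuine gap is in your verification that $E_Y$ is transitive, and the same missing ingredient is needed in every tracking that follows. Membership $(c,f,h)\in E_Y$ has the form $\forall u\in\gamma(c).\,\Phi(u,f,h)$, and your hypotheses only give $\Phi(u,f,h)$ for $u\in\gamma(a)\cap\gamma(b)$; so you need a $c$ computed from $a$ and $b$ with the \emph{upper} bound $\gamma(c)\subseteq\gamma(a)\cap\gamma(b)$. The axiom you invoke, $\gamma(a)\cap\gamma(b)\subseteq\gamma(ab)$, bounds $\gamma$ of an application from \emph{below}, which is the useless direction here: it cannot rule out that $\gamma(c)$ contains stages at which $f$ and $h$ disagree. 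What actually closes the argument is the identity $\gamma(pab)=\gamma(a)\cap\gamma(b)$ for $p\in\pair\cap A'$, whose nontrivial half $\gamma(pab)\subseteq\gamma(a)$ follows from the monotonicity axiom together with $pabt\leq tab\leq a$ for $t\in\true$ and the axiom $\gamma(t)=P$ for $t\in\true\cap A'$, so that $\gamma(pab)=\gamma(pab)\cap\gamma(t)\subseteq\gamma(pabt)\subseteq\gamma(a)$, and symmetrically with $\false$. The paper isolates exactly this identity at the end of its proof, and your argument cannot go through without it. A secondary point: do not try to prove the unit and counit invertible. The counit is not an isomorphism in general (for the character with $\gamma(a)=P$ for all $a$, the composite $\gamma^*\gamma_*$ sends a presheaf to the constant presheaf on its object of global sections), and the theorem only requires the two triangle identities.
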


\begin{proof} For each object $(X,p,r)\in\psh{P}$, let 
\begin{align*}
x,y\in X\quad e(x,y) &= \Set{u\in P| u\leq px,u\leq py, r(x,u)=r(y,u)} \\
\gamma_*(X,p,r) &= (X, \Set{ (a,x,y)\in A\times X\times Y | \gamma(a) \subseteq e(x,y) }
\end{align*}
For each morphism $f:(X,p,r) \to (X',p',r')$ we let:
\[ \gamma_*f = \Set{(a,x,y) \in A\times X\times X' | \gamma(a) \subseteq e(fx,y) } \]

By writing out the definitions we find that if $\gamma^*\gamma_*(X,p,r)  = (X',p',r')$ then
\[ (u,\xi)\in X' \iff \exists x\in X. \xi = \Set{y\in X| u\leq p(y), r(y,u) = r(x,u) } \]
This new presheaf is isomorphic to $(X,p,r)$, by the following isomorphism:
\begin{align*}
g(x) &= (px,\Set{y\in X| u\leq p(y), r(y,u) = x }) \\
\forall x\in \xi. \quad \epsilon_{(X,p,r)}(u,\xi) &= r(x,u)
\end{align*}
The second family of morphisms acts as counit.

If $\gamma_*\gamma^*(X,E) = (X',E')$, then
\[ X' = \Set{ (u, \db x_u) \in P\times \Omega^X | x\in X } \]
with $\db x_u$ defined as before. We simplify the partial equivalence relation.
\[ E' = \Set{ (a,(u,\db x_u),(v,\db y_v)) | (\forall w\in \gamma(a). w\leq u,w\leq v)\land (a,x,y)\in E } \]
We define a family of functional relations $(X,E) \to \gamma_*\gamma^*(X,E)$ by
\[ \eta_{(X,E)} = \Set{ (a,x,(u,\db x_u)) \in A\times X\times X' | u\in \gamma(a) } \]
The subobject $\comb I$ tracks all of these, and together they form the unit.

We conclude that $\epsilon_{\gamma^*}\circ \gamma^*\eta = \id_{\gamma^*}$ because
\[ \gamma^*\eta_{(X,E)}(u,\xi) = (u, \Set{ (v,\db x_v)\in X' | x\in \xi })  = g(u,\xi) \]

By writing out definitions we also find that $(a,(u,\xi),y) \in \gamma_*\epsilon_{(X,p,r)}$ if for all $v\in \gamma(a)$ and $x\in \xi$, $v\leq u$ and $r(x,v) = r(y,v)$, while $(b,x,(u,\db x_u))\in \eta_{\gamma_*(X,E)}$ if $u\in \gamma(b)$. We have
$\gamma_*\epsilon\circ\eta_{\gamma_*} = \id_{\gamma_*}$, because for any $p\in\pair$ we have $\gamma(pab)=\gamma(a)\cap\gamma(b)$.

So $\gamma_*$ is right adjoint to $\gamma^*$. Since $\gamma^*$ is regular their combination is a geometric morphism.
\end{proof}

\begin{remark} An internal Grothendieck topology $J$ on a preordered object $P$ allows us to define a topos of sheaves $\Sh(P,J)$. This topos is embedded in $\psh{P}$ by a geometric morphism. Therefore, we can relate geometric morphisms $\Sh(P,J) \to \RT(A',A)$ to characters $\gamma:A\to DP$ of which the values are $J$-closed sets.\end{remark}

\begin{remark}
For the trivial poset that is the terminal object $\term$ we have $\psh{\term}\cong \cat E$, and characters are points.
\end{remark}

Toposes of sheaves are better understood then relative realizability toposes. By inducing geometric morphisms between these two kinds of toposes, characters may clarify the theory of relative realizability.

\subsection{Applicative Morphisms} \label{appmorph}
In this subsection we consider regular functors between realizability categories for different OPCA pairs. The filters that induce these functors are the applicative morphisms that were defined by Longley \cite{RTnLS}, Hofstra and van Oosten \cite{MR1981211} and Hofstra \cite{MR2265872}.

\begin{definition} Let $(A',A)$ and $(B',B)$ be two OPCA pairs in an Heyting category $\cat E$. An \emph{applicative morphism} $\gamma:(A',A) \to (B',B)$ is a $B$-assembly $(A,C)$ over $A$, such that the following subobjects of $B$ intersect $B'$.
\begin{align*}
&\Set{u\in B| \forall (x,y)\in C, y'\in A. y\leq y' \to (ux\converges \land (ux,y')\in C }\\
&\Set{r\in B| \forall (x',x), (y',y)\in C. xy\converges \to ((rx')y'\converges \land ((rx')y',xy)\in C) } \\
\forall a\in A'\quad & \Set{b\in B| (b,a)\in C }
\end{align*}
\end{definition}

\begin{theorem} For each applicative morphism $\gamma:(A',A) \to (B',B)$ there is an up to isomorphism unique regular functor $F: \Asm(A',A) \to\Asm(B',B)$ such that $F\A \simeq (A,C)$ and $F\nabla\simeq \nabla$. For each regular functor $F:\Asm(A',A) \to\Asm(B',B)$ such that $F\nabla\simeq \nabla$, there is an up to isomorphism unique applicative morphism $\gamma:(A',A) \to (B',B)$.\end{theorem}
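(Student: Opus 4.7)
The plan is to reduce the statement to the correspondence already established by the corollary after Theorem \ref{main}, namely the equivalence between $F$-filters and regular extensions of $F$. Taking $F = \nabla \colon \cat E \to \Asm(B',B)$ to be the constant-assembly functor of the target category, a regular extension of $\nabla$ is exactly a regular functor $G \colon \Asm(A',A) \to \Asm(B',B)$ with $G\nabla \simeq \nabla$. So once I show that applicative morphisms $(A',A) \to (B',B)$ are the same data as $\nabla$-filters of $(A',A)$ along this $\nabla$, both halves of the theorem fall out of that corollary.

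Step 1: match the data. A subobject of $\nabla A = (A, B \times A)$ in $\Asm(B',B)$ is, up to isomorphism, of the form $(A, C)$ with $C \leq B \times A$ satisfying the assembly axioms (by the representation of subobjects used in the proof of lemma \ref{heyt}). This is literally the underlying datum of an applicative morphism.

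Step 2: match the three clauses. Given such $(A,C) \hookrightarrow \nabla A$, I translate:
The upward-closure clause of a $\nabla$-filter under the order $\nabla({\leq}_A)$ asks for the second projection of the subobject $\{(a,a') \in \nabla A \times \nabla A \mid a \leq a',\, a \in (A,C)\}$ into $(A,C)$ to be a morphism of $\Asm(B',B)$; unwinding the definition of morphism, a tracker in $B'$ for this second projection is exactly a $u$ as in the first applicative-morphism clause.
Closure under application in $\nabla A$ similarly unwinds to the existence of an $r \in B'$ realizing pointwise closure of $C$ under application, which is the second applicative-morphism clause.
For the intersection clause, unwinding "$\nabla U \cap (A,C)$ is inhabited in $\Asm(B',B)$" gives: $U$ is inhabited in $\cat E$ and the $\cat E$-subobject $\{b \in B \mid \exists a \in U.\, (b,a) \in C\}$ is shown to intersect $B'$ by a realizer built in $B'$. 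Given an $a \in A'$ with $(b,a) \in C$ and $b \in B'$, the combinator $\comb K b \in B'$ realizes uniform membership, matching clause (3) of the applicative-morphism definition; conversely, given such a realizer, the projection $(b,a) \mapsto a$ furnishes the pointwise condition when specialized to $U = \{a\}$-style subobjects internally.

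Step 3: assemble the theorem. Given an applicative morphism $\gamma = (A,C)$, Step 2 shows that $(A,C)$ is a $\nabla$-filter; Theorem \ref{main} applied to the regular model $(\nabla \colon \cat E \to \Asm(B',B),\, (A,C))$ supplies an up-to-isomorphism unique regular functor $F \colon \Asm(A',A) \to \Asm(B',B)$ with $F\nabla \simeq \nabla$ and $F\A \simeq (A,C)$. Conversely, given a regular $F$ with $F\nabla \simeq \nabla$, the corollary after Theorem \ref{main} identifies $F$ with the $\nabla$-filter $F\A \hookrightarrow F\nabla A \simeq \nabla A$; Step 2 then reinterprets this filter as an applicative morphism, uniquely up to isomorphism because the equivalence of categories in the corollary is essentially a poset (inclusion of filters).

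The main obstacle is clause (3) of Step 2: the filter version is a schema of external implications over arbitrary subobjects $U \leq A$ meeting $A'$, while the applicative-morphism version is a pointwise statement over elements of $A'$ using the internal language of $\cat E$. Turning pointwise realizers in $B'$ into a single "uniform" realizer that witnesses inhabitation of $(U, C|_U)$ in $\Asm(B',B)$ for arbitrary $U$, and doing so without assuming choice in $\cat E$, is the delicate point; the constant combinator $\comb K$ together with the closure properties of $B'$ under application is what makes this bridge work.
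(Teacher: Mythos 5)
Your proposal follows the paper's own route exactly: the paper's proof consists of the observation that an applicative morphism \emph{is} a filter for $\nabla\colon\cat E\to\Asm(B',B)$, so that pseudoinitiality (Theorem~\ref{main}, equivalently the corollary on regular extensions) yields the up-to-isomorphism unique functor, and conversely that any regular $F$ with $F\nabla\simeq\nabla$ sends $\A$ to a filter on $\nabla A$, which is read back as an applicative morphism. Your Step~2, matching the three filter clauses against the three applicative-morphism clauses, is carried out in more detail than in the paper (which asserts the identification in one line), and the delicate point you flag about the third clause --- passing between the external schema over subobjects $U$ meeting $A'$ and the pointwise statement over elements of $A'$ --- is real, but the paper glosses over it entirely, so your argument does not diverge from the one given there.
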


\begin{proof} $\gamma$ is a filter for $\nabla:\cat E\to \Asm(B',B)$, so $(\nabla,\gamma)$ is a regular model for $(A',A)$. Therefore there is an up to isomorphisms unique regular functor $\Asm(A',A)\to\Asm(B',B)$ satisfying the conditions.

Any regular functor $F$ such that $F\nabla \simeq \nabla$ will map $\id_A:\A\to\nabla A$ to some monomorphism $F\A\to F\nabla A$. The image of $F\A$ along the composition of $F\id_A$ with the isomorphism $F\nabla A \to \nabla A$ is an applicative morphism because $F$ preserves filters.
\end{proof}

Unlike characters, applicative morphisms do not generally induce geometric morphisms if the underlying category is a topos. The ones that do have the following property.

\begin{definition} For $\gamma:(A',A)\to(B',B)$ we define the arrow $\gamma:A\to DB$ by $\gamma(a) = \Set{b\in B|b\in\gamma(a)}$. We define the following relation on $DB$: $UV \convergesto W$ if and only if 
\[ \forall x\in U,y\in V.\exists z\in W. xy\convergesto z \]
The term $UV$ stands for the least $W\in DB$ such that $UV\convergesto W$ and remains undefined if no such $W$ exists. The applicative morphism $\gamma$ is \emph{computationally dense} if there is some $\mu\subseteq B$ intersecting $B'$ such that for each $U\in DB$ that intersects $B'$ the following subobject of $A$ intersects $A'$.
\[ U^\mu = \Set{a\in A| \forall x\in A. U\gamma(x)\converges \to ax\converges\land \mu\gamma(ax) \convergesto U\gamma(x)\converges)} \]
\end{definition}

\begin{theorem} Computationally dense applicative morphisms induce geometric morphisms between relative realizability toposes. \end{theorem}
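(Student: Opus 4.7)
The plan is to construct a right adjoint $\gamma_*: \RT(B',B) \to \RT(A',A)$ to the regular functor $\gamma^*: \RT(A',A) \to \RT(B',B)$ that is supplied by the previous theorem together with the exact-completion construction of subsection \ref{RRT}. Since $\gamma^*$ already preserves finite limits, producing such a right adjoint is exactly what is needed to upgrade $\gamma^*$ to the inverse image part of a geometric morphism.

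I would work in the subquotient presentation from subsection \ref{contop}, where an object of $\RT(B',B)$ is a pair $(Y,E)$ with $E\subseteq B\times Y^2$ symmetric and transitive in $\Asm(B',B)$. Define $\gamma_*(Y,E) = (Y, E^\gamma)$ by declaring that $(a,y_1,y_2)$ belongs to $E^\gamma$ precisely when $\mu\cdot\gamma(a)$ lands in the set of $B$-realizers of $(y_1,y_2)$ in $E$. The two realizer clauses in the definition of an applicative morphism, combined with combinatory completeness in $A'$ and the operator $\mu$ supplied by computational density, provide $A'$-realizers witnessing that $E^\gamma$ is an assembly of $\Asm(A',A)$ which is symmetric and transitive over $\nabla Y$. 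Morphisms of $\RT(B',B)$, presented as equivalence classes of functional relations, are transferred by the same pattern, and routine combinator calculations show the construction is functorial.

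The core of the proof is the transpose bijection between functional relations $\gamma^*(X,D)\to(Y,E)$ in $\RT(B',B)$ and functional relations $(X,D)\to\gamma_*(Y,E)$ in $\RT(A',A)$. One direction, turning an $A'$-tracked relation into a $B'$-tracked one, is immediate from the realizer clauses defining an applicative morphism. The other direction is the main obstacle and is precisely where computational density plays its role. Given a functional relation $G$ with a $B'$-realizer, the set $U$ of its $B$-realizers is a downset of $B$ intersecting $B'$, so by hypothesis $U^\mu$ intersects $A'$. An element of $U^\mu\cap A'$, composed with suitable combinators, tracks the transposed relation in $\Asm(A',A)$: applied to an $A$-realizer $x$ of an input it produces $ax\in A$ such that $\mu\gamma(ax)$ lands in $U\gamma(x)$, which is exactly the condition required for membership in $E^\gamma$ at the image. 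Once both transposes are in place, naturality of the bijection and the triangle identities reduce to standard bookkeeping with these same combinators, so $\gamma^*\dashv\gamma_*$ and the desired geometric morphism is obtained.
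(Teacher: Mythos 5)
Your construction of the direct image on subquotients with the same carrier, $\gamma_*(Y,E)=(Y,E^\gamma)$, breaks down exactly at the step you identify as the main obstacle, and computational density does not rescue it. For the transpose of a $B'$-tracked functional relation $G:\gamma^*(X,D)\to(Y,E)$ you must verify totality: uniformly in an $A$-realizer $d$ of an input $x$ you must produce $a\in A$ together with a \emph{single} output $y$ such that $\mu\gamma(a)$ is contained in the realizer set $G(x,y)$. Applying computational density to the downset $U$ of realizers of $G$ only yields an $a$ with $\mu\gamma(a)$ contained in the downward closure of $U\cdot\gamma(d)$, hence in $\bigcup_y G(x,y)$. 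Since $G$ is single-valued only up to $E$-equivalence, the sets $G(x,y)$ for $E$-equivalent $y$ are interderivable but not equal, so $\mu\gamma(a)$ can be smeared across several of them without lying in any one; equivalently, the right adjoint on predicates does not commute with $\exists_y$. The problem is already visible for global elements ($X=\term$, $D=\top$): a $B'$-realized point of $(Y,E)$ need not transpose to an $A'$-realized point of $(Y,E^\gamma)$. In general $(Y,E^\gamma)$ is only a proper subobject of the true direct image, and your claimed bijection of hom-sets fails.

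The paper sidesteps this entirely by never constructing the direct image on subquotients: it works in the tripos $X\mapsto\Sub(\nabla X)$, establishes only the \emph{fibrewise} adjunction $\gamma^*\dashv\gamma_\mu$ between subobjects of $\nabla_A X$ and of $\nabla_B X$ (the counit tracked by $\mu$, the unit by $\iota^\mu$, and monotonicity of $\gamma_\mu$ by $U^\mu$), and then appeals to the standard theorem that an adjoint pair of transformations of triposes induces a geometric morphism of the associated toposes. In that theorem the direct image of $(Y,E)$ is carried not by $Y$ but by a power object of $Y$, whose elements encode entire predicates $G(x,-)$; this is precisely the device that absorbs the failure of $\gamma_\mu$ to commute with existential quantification. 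To repair your argument you would either have to reproduce that power-object construction explicitly, or retreat, as the paper does, to the predicate level where only the fibrewise adjunction is needed.
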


\begin{proof} We leave to the reader to check that for each relative realizability topos $\RT(A',A)$ over a base topos $\cat E$ the assignment $X\to \Sub(\nabla -)$ is a tripos over $\cat E$ and that an adjoint pair of transformations of triposes induces a geometric morphism \cite{MR2479466}.

For clarity, let $(\nabla_A,\A)$ be an initial exact model for $(A',A)$ and $(\nabla_B,\mathring B)$ for $(B',B)$. Since the regular functor that $\gamma$ preserves $\nabla$ and subobjects, the functor relates to a transformation of triposes $\Sub(\nabla_A-) \to \Sub(\nabla_B-)$. So we need to find a right adjoint to that transformation.

Fixing $X\in \cat E$, we may represent subobjects of $\nabla_AX$ by subobjects of $A\times X$ and subobjects of $\nabla_B$ by subobjects of $B\times X$. We can represent the transformation induced by $\gamma=(A,C)$ with the following map.
\[ \gamma^*Y = \Set{(b,x)\in B\times X| \exists a\in A. (b,a)\in C\land (a,x)\in Y } \]
Now we finally start constructing a right adjoint.
\[ \gamma_\mu Y =  \Set{ (a,x)\in A\times X| \mu \gamma(a)\downarrow\Set{b\in B| (b,x)\in Y} } \]
Automatically $\mu$ tracks the inclusion $\id_{\Dom(X,\gamma^*\gamma_\mu Y)}:(X,\gamma^*\gamma_\mu Y) \to (X,Y)$. To find a tracking for the inclusion $(X,Y)\to (\gamma_\mu\gamma^*Y)$ let
\[ \iota = \Set{b\in B| \forall x\in B.\exists y\leq x. bx\convergesto y } \]
Since the identity arrow is combinatory, the subobject $\iota$ intersects $B'$ and $\iota^\mu$ intersects $A'$. The tracking we need is $\iota^\mu$.

To establish that $\gamma_\mu$ is a well defined mapping $\Sub(\nabla_BX) \to \Sub(\nabla_A X)$, let $(X,Y)$ and $(X',Y')$ be any pair of assemblies for $(B',B)$, and let \[U = \Set{b\in B| \forall (x,y)\in Y. bx\converges, (bx,y)\in Y'} \] If $(a,x)\in \gamma_\mu(Y)$ and $u\in U^\mu$, then $ua\converges$ and $\mu\gamma(ua)\downarrow U\gamma(a)$. This implies $U^\mu$ tracks the inclusion of $(X,\gamma_\mu(Y))$ into $(X,\gamma_\mu(Y'))$.

Thus we get a right adjoint to $\gamma^*$, and a geometric morphism of relative realizability toposes.
\end{proof}

\section{Conclusion}
The relative realizability topos for an OPCA pair $(A',A)$ in $\Cat{Set}$ satisfies a universal property: $\RT(A',A)$ is the universal exact category that adds a new subobject to $A$ that is closed under application and that intersects all subsets that intersect $A'$, while preserving regular propositions. There is a construction for relative realizability categories for OPCA pairs in other Heyting categories that satisfies a similar universal property. The universal property allows us to study regular functors by studying filters of order partial combinatory algebras.

I thank the referees for their useful remarks.

\subsection{Further Thoughts}
We consider a couple of topics for future publications.

Carboni and Celia Magno \cite{MR678508} described the exact completion of left exact categories. Robinson and Rosolini showed \cite{MR1056382} that realizability toposes constructed over the category of sets are exact completions of subcategory of partitioned assemblies. Carboni noted \cite{MR1358759} that the category of assemblies is an intermediate step, being the \emph{regular completion} of the category of partitioned assemblies. The relation between the various completions is explained in \cite{MR1600009}.

Relative realizability toposes over toposes where epimorphisms don't split no longer are exact completions of their categories of partitioned assemblies. Hofstra developed the alternative notion of \emph{relative completion} \cite{MR2067191}, to deal with the more general case. Relative completions works for OPCA pairs $(A',A)$ where $A'$ has enough global sections, which means that every inhabited subobject has a global lower bound in $A'$. It may be interesting to see if there is a natural completion construction that works for other pairs.

While the limitations of Heyting categories require the universal property we gave in this paper, it is possible to characterize relative realizability \emph{toposes} by another, possibly more useful pseudoinitiality property. In a topos $\cat E$ every OPCA pair $(A',A)$ has a `completion' $(B',B)$, where $B=D^*A$ (see lemma \ref{genmon}) and $B'\subseteq D^*A$ is the object of downsets of $A$ that intersect $A'$. We call this a `completion', because $D^*A$ is closed under joins of inhabited subobjects. Because of this completeness property, every representable function is globally representable: any representable $f:B^n\partar B$ is represented by the global section $\bigcup\db f:\term \to B$, because application in $B$ preserves joins.

We believe that relative realizability toposes, and some of their subtoposes, have a the following universal property involving complete OPCA pair and left exact functors. Let a \emph{left exact model} be the combination of a left exact $F:\cat E\to\cat C$ with a $C\subseteq FB$ that is upward closed and closed under application, and through which every $Fx:\term \to FB'\subseteq FB$ factors. The functor $\nabla:\cat E\to \Asm(A',A)$ together with $\mathring B = (B,\set{(a,b)\in A\times B| a\in b})$ is a pseudoinitial left exact model. Using Frey's analysis of the tripos-to-topos construction \cite{a2CAotTtTC}, we may be able to derive another universal property of $\RT(A',A)$.

In \cite{MR2265872}, Hofstra uses \emph{basic combinatory objects} to provide a framework for all kinds of realizability.
Complete OPCA pairs, like $(B',B)$, are a special case of basic combinatory objects, and this may help research in this area.


\end{document}